\documentclass[a4paper,11pt]{article}

\usepackage[scale=0.75,a4paper]{geometry}
\usepackage[parfill]{parskip}
\usepackage{graphicx}
\usepackage{amssymb}
\usepackage{epstopdf}
\usepackage{float}
\usepackage{mathtools}
\usepackage{listings}

\usepackage{libertine}
\usepackage{amsmath}

\AtBeginDocument{%
  \DeclareSymbolFont{mytt}{OT1}{cmtt}{m}{n}%
  \SetSymbolFont{mytt}{bold}{OT1}{cmtt}{bx}{n}%
  \DeclareSymbolFontAlphabet{\mathtt}{mytt}%
}

\usepackage[normalem]{ulem}
\usepackage{mathrsfs}
\usepackage{bbm}
\usepackage{todonotes}
\usepackage{units}
\usepackage{comment}

\usepackage{amsthm}

\usepackage{multirow}
\usepackage[title]{appendix}
\usepackage{textcomp}
\usepackage{manyfoot}
\usepackage{booktabs}

\usepackage{enumitem}
\usepackage{algorithm2e}
\usepackage{wasysym}

\usepackage{placeins}

\usepackage{appendix}

\usepackage{tcolorbox}

\usepackage[maxnames=6, backend=bibtex, url=false, isbn=false, doi=false, style=numeric]{biblatex}
\renewbibmacro{in:}{}
\usepackage[affil-sl]{authblk}

\newtheorem{theorem}{Theorem}[section]
\newtheorem{corollary}[theorem]{Corollary}
\newtheorem{proposition}[theorem]{Proposition}

\newtheorem{lemma}[theorem]{Lemma}

\newtheorem*{theorem*}{Theorem}

\theoremstyle{remark}
\newtheorem{remark}[theorem]{Remark}

\AtBeginEnvironment{remark}{%
  \pushQED{\qed}%
}
\AtEndEnvironment{remark}{\popQED}

\theoremstyle{definition}

\AtBeginEnvironment{definition}{%
  \pushQED{\qed}%
}
\AtEndEnvironment{definition}{\popQED}

\newcommand{\1}{\mathbbm{1}}

\newcommand{\de}{\operatorname{d}}

\DeclareMathOperator{\e}{\mathbb{E}}
\DeclareMathOperator{\p}{\mathbb{P}}
\DeclareMathOperator{\te}{\widetilde {\mathbb{E}}}
\DeclareMathOperator{\tp}{\widetilde {\mathbb{P}}}

\DeclareMathOperator{\T}{{\mathbb{T}_d}}

\usepackage{subcaption}

\usepackage{nameref}
\makeatletter
\let\orgdescriptionlabel\descriptionlabel
\renewcommand*{\descriptionlabel}[1]{%
  \let\orglabel\label
  \let\label\@gobble
  \phantomsection
  \edef\@currentlabel{#1\unskip}%
  \let\label\orglabel
  \orgdescriptionlabel{#1}%
}
\makeatother

\usepackage{hyperref}
\usepackage{orcidlink}

\title{Percolation of the contact process on the regular tree}
\author[1]{John Fernley \orcidlink{0000-0002-6635-4341}}
\affil[1]{CRiSM, Department of Statistics, University of Warwick,
United Kingdom. 
}
\author[2]{Emmanuel Jacob \orcidlink{0000-0001-5232-6668}}
\affil[2]{
Department of Mathematics,
New York University Shanghai, China.
}
\date{\today}
  
\begin{document}
\maketitle

\begin{abstract}
The contact process on the regular  tree $\T$ when $d\geq 3$ has the two phase transitions of global and of local survival, found by Pemantle and Liggett at values $\lambda_1$ and $\lambda_2$.
We start the system with every vertex infected and let it relax to what is known as the upper invariant infection. In this stationary state, $\lambda_p$ is the critical value beyond which the infected vertices can percolate
 through $\T$, and $\lambda_{p^\complement}$ is the parameter before which the healthy vertices can percolate. We find these are both distinct phase transitions
\[0<\lambda_1<\lambda_p<\lambda_2<\lambda_{p^\complement}<+\infty\]
on $\T$ when $d\geq 7$. The most interesting of these comparisons is $\lambda_1<\lambda_p$, which we find for all $d\geq 3$. This comparison $\lambda_1<\lambda_p$ is a long-standing open question on $\mathbb{Z}^d$ with $d\geq 2$ and was not yet found on any other graphs except where $\lambda_p$ is infinite.
\end{abstract}


\section{Introduction}

The random regular graph is a simple model for a sparse social network, and the contact process is a simple model for a spreading epidemic. It was found in \cite{MR3492935,MR3693520} that the contact process with infection rate $\lambda>0$ has two phases on this graph: for $\lambda<\lambda_1(\T)$ the epidemic period is logarithmic in the size of the graph whereas for $\lambda>\lambda_1(\T)$ it is exponential. This transition point is denoted $\lambda_1(\T)$ because it is the same parameter value $\lambda_1(\T)\in[\tfrac{1}{d-1},\tfrac{1}{d-2})$ identified by \cite{MR1188054} at which an infinite epidemic becomes possible on the regular tree $\T$, which is the local limit of the random regular graph. With this perspective of the tree as the local environment of the network, it is natural to expect that the metastable epidemic has the same features in both contexts. In this work, therefore, we look at structural features of a stationary infection on the regular tree $\T$ in order to make (unproved) inferences about the structural features of a metastable epidemic: in particular, about the connectivity properties of the infected set.

Consider the rooted graph $G=(V,E)$ to be either the $d-$dimensional lattice $\mathbb Z^d$, or the $d-$regular tree $\T$, with root $\circ$. The contact process on $G$ with infection rate $\lambda>0$ is a continuous-time Markov process $(\xi_t)_{t\ge 0}$ with values in $\{0,1\}^V$ describing the spread of an infection on $G$, with the interpretation that $\xi_t(x)=1$ if the individual $x$ in $V$ is infected at time $t$, while $\xi_t(x)=0$ if it is healthy.
For convenience, we often identify the configuration $\xi_t$ with the set of infected vertices $A_t=\xi_t^{-1}(\{1\})$.

The dynamics of the process are as follows: each infected vertex recovers at rate 1, and each edge between an infected vertex and a healthy one transmits the infection to the healthy vertex at rate $\lambda$. One way to make this description rigorous uses the \emph{graphical representation} of the contact process. Attach to each vertex a Poisson process of intensity 1 describing recoveries of the vertex if infected, and each directed edge a Poisson process of intensity $\lambda$ of \emph{infection arrows}, describing the transmission of the infection along the edge, assuming the head vertex is infected and the tail vertex healthy. 
Given an initial set of infected vertices $A$, the configuration
$\xi_t^A$ at time $t$ consists of all vertices reachable at time $t$ via infection paths starting from $A$ at time 0, 
following infection arrows and avoiding recoveries. Special cases include $\xi_t^{\{\circ\}}$ (infection starting only at the root) and $\xi_t^V$ (process starting from full infection).


Define the weak survival probability
\[
\rho(\lambda)=\p\big(\,
\forall t\geq 0, \; \xi_t^{\{\circ\}}\neq \varnothing
\big)
\]
and the strong survival probability
\[
\rho_2(\lambda)=\p\big(\, \xi_t^{\{\circ\}}(\circ)=1 \text{ for arbitrarily large }t
\big)\le \rho(\lambda).
\]
Because $\rho$ and $\rho_2$ are necessarily non-decreasing in $\lambda$ we can define the critical parameters
\begin{align*}
\lambda_1&=\sup\left\{
\lambda : \rho(\lambda)=0
\right\},\\
\lambda_2&=\sup\left\{
\lambda : \rho_2(\lambda)=0
\right\},
\end{align*}
and we automatically have $\lambda_1\le \lambda_2$.
When $G$ is either $\mathbb Z^d$ or $\T$, it is well-known that these phase transitions are nontrivial, namely $\lambda_1$, $\lambda_2$ are in $(0,+\infty)$. Moreover, the phase transition is continuous at $\lambda=\lambda_1$, and actually the weak survival probability $\rho(\lambda)$ is continuous in $\lambda$ on the whole line.


A striking difference between the lattice and the tree is that we have a single phase transition with $\lambda_1=\lambda_2$ in the case of lattice $\mathbb Z^d$, while we have a double phase transition with $\lambda_1<\lambda_2$ in the case of the $d-$regular tree $\T$ with $d\ge 3$ (note of course that $\mathbb{T}_2=\mathbb{Z}$).

The main interest in this paper lies however in a different phase transition, regarding percolation of the upper-invariant measure. The upper invariant measure is the probability measure $\nu_\lambda$ on sets $\eta \subset V(G)$ which can be defined as the limit distribution of the set of infected vertices on the graph started with full infection. By time-reversal duality of the contact process, we can also construct a random set $X_\lambda\sim\nu_\lambda$ by declaring a vertex $v$ to be in $X_\lambda$ if the infection started from only $v$ infected survives in the graphical representation, namely
\[
X_\lambda=\{v \in V \; : \; \forall t\ge 0, \; \xi_t^{\{v\}}\ne \emptyset\}.
\]
Say that $X_\lambda$ percolates if it contains an infinite component (note this is an increasing property), and define the ``percolation of the upper invariant measure'' threshold $\lambda_p$ as
\begin{align*}
\lambda_p&=\sup\left\{
\lambda : \p(X_\lambda \textrm{ percolates})=0
\right\}\\
&=\sup\left\{
\lambda : \p(\circ \textrm{ is in an infinite component of }X_\lambda)=0
\right\} .
\end{align*}
Note that for each $v\in V$, we have $\p(v\in X_\lambda)=\rho(\lambda)$, and actually $\nu_\lambda$ is nontrivial,  namely different from the Dirac mass on the all healthy configuration, if and only if $\rho(\lambda)>0$. We thus have the trivial inequality $\lambda_1\le \lambda_p$.


The percolation properties of $X_\lambda$ have been first considered in~\cite{MR2199800}, which proves in particular that for the $d$-dimensional lattice $\mathbb Z^d$, writing $\lambda_p(\mathbb Z^d)$ to stress the dependence on the graph of the percolation threshold, we have
\begin{align*}
\lambda_p(\mathbb Z)&= +\infty,\\
\lambda_p(\mathbb Z^d)&\le 6.25 <+\infty\qquad \text{ for }d\ge 2. 
\end{align*}
The same paper also raises the natural question whether we have $\lambda_1<\lambda_p$ on $\mathbb Z^d$ when $d\ge 2$, which remains a difficult open problem. To the best of our knowledge, such a result has previously never been proven for any graph with $\lambda_p<+\infty$.  In \cite{MR4452653} a variant question was answered about percolation through $\mathbb{Z}^d$ where the contact process in essence lives on a different graph and can use extra edges between all pairs of $\ell^\infty$ distance at most $R$, but the percolation considered is nonetheless only through $\mathbb{Z}^d$ edges.

In this paper, we study $\lambda_p$ in the case of the $d-$regular tree $\T$. The appearance of a distinct phase transition is an interesting mathematical observation in its own right, but moreover we see the upper invariant measure on the tree as analogous to the metastable infection on the random regular graph, a correspondence exploited in e.g. \cite{MR5091097}. Understanding the structure of a stationary infection on the regular tree is thus a very natural point of interest in a natural simplified model of an epidemic. In particular, more structural understanding of the metastable epidemic could lead to understanding the most likely path to recovery and so the length of the epidemic period \cite[Section 3.6.5]{aldous-fill-2014}.

\section{Results}

Our results on $\lambda_p$ are as follows:

\begin{theorem}\label{theorem_lambda_p}
	On the $d-$regular tree $\T$ with $d\ge 3$, the upper-invariant percolation threshold $\lambda_p$ satisfies:
	\begin{enumerate}[label=(\alph*), ref=\thetheorem(\alph*)]
		\item\label{theorem_lambda_p:a} the lower bound 
		\[\lambda_1<\lambda_p\]
		\item\label{theorem_lambda_p:b} the upper bound 
		\[\lambda_p \le \frac 1 {d-2}\]
		or for $d\geq 7$ the upper bound
		\[
		\lambda_p\le \frac 1 {d-2+\tfrac{1}{91}}.
		\]
	\end{enumerate}
\end{theorem}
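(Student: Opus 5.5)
For part \ref{theorem_lambda_p:a}, the plan is a first-moment argument over paths. Since $\rho$ is continuous and $\rho(\lambda_1)=0$, I would pick $\lambda\in(\lambda_1,\lambda_2)$ close to $\lambda_1$ so that $\rho(\lambda)$ is as small as needed. It then suffices to show that for a fixed self-avoiding path $v_0,\dots,v_n$ in $\T$,
\[
\p\big(v_0,\dots,v_n\in X_\lambda\big)\le C\,r^n \qquad\text{with } (d-1)r<1 .
\]
Indeed, there are $d(d-1)^{n-1}$ such paths from $\circ$, so the expected number of length-$n$ paths from $\circ$ inside $X_\lambda$ tends to $0$, and $\circ$ is a.s.\ not in an infinite component.

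To bound the joint survival probability, I would give each surviving $v_i$ a witness: an infinite infection path from $(v_i,0)$ in the graphical representation. Group the indices $i$ into clusters by whether the witnesses of consecutive vertices can be chosen disjoint. Within a cluster $v_i,\dots,v_j$, a single witness must reach each of these vertices in turn. Because $\lambda<\lambda_2$, the standard exponential decay below $\lambda_2$ (Lalley--Sellke / Pemantle type, $\p(\xi^{\{v\}}\text{ ever infects }x)\le C\beta^{|x-v|}$ with $\beta=\beta(\lambda)<1$) makes this cost at most $C\beta^{\,j-i}$. Distinct clusters have disjointly occurring witnesses, so the BK--Reimer inequality gives a factor $\rho(\lambda)$ per cluster. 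Summing over cluster decompositions yields $\p(\cdot)\le\big(C\rho(\lambda)+C\beta\big)^{n}$ up to constants.

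This is the step I expect to be hardest. The issue is that $\beta(\lambda)$ does not go to $0$ as $\lambda\downarrow\lambda_1$, so the bound $C\beta$ alone is not below $1/(d-1)$. I would first try to sharpen the within-cluster cost, replacing $\beta^{j-i}$ by $\beta^{j-i}\rho(\lambda)^{c(j-i)}$. The idea is that a witness that goes out to distance $k$ and still survives must also survive from its far end. Combined with $\sqrt{d-1}\,\beta\le 1$ below $\lambda_2$, this should make every term carry a small factor, giving $(d-1)r<1$ once $\rho(\lambda)$ is small enough.

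For part \ref{theorem_lambda_p:b}, I would fix $\lambda>1/(d-2)$ and build an explicit supercritical Galton--Watson tree inside $X_\lambda$. Start from a vertex $v$ whose infection survives inside the subtree hanging below it. Before $v$ first recovers, $v$ sends an arrow to each of its $d-1$ children, each independently with probability $\lambda/(1+\lambda)$. Declare a child $w$ good if $w$ is reached this way and the infection from $w$ survives in $w$'s own subtree. Survival from $w$ implies $w\in X_\lambda$. Moreover, different children use disjoint parts of the graphical representation after their arrival times, so goodness is independent across children and across generations. The good vertices therefore form a Galton--Watson tree whose offspring mean is $(d-1)\frac{\lambda}{1+\lambda}\,\tilde\rho$, where $\tilde\rho$ is the survival probability inside a $(d-1)$-ary subtree. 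I then need this mean to exceed $1$. That would follow from a lower bound on $\tilde\rho$ of the form $\tilde\rho\gtrsim 1-\frac{1}{\lambda(d-2)}$, obtained by comparing the number of infected children with a branching process. To close the gap up to $1/(d-2)$ exactly, I would count re-infections of $v$ from its parent and iterate.

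For $d\ge 7$ the improvement to $1/(d-2+\tfrac1{91})$ should come from a second-order term in this expansion. Specifically, I would also count grandchildren infected through a child that recovered before establishing survival, and check numerically that the resulting mean exceeds $1$ for all $d\ge7$ at this $\lambda$.
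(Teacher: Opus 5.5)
Both parts of your plan have genuine gaps.

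\textbf{Part (a).} You correctly see that a per-step factor of order $\beta(\lambda)+\rho(\lambda)$ cannot beat $\frac1{d-1}$, since $\beta(\lambda)\ge\beta(\lambda_1)=\frac1{d-1}$. However, the proposed repair is false. Suppose none of $v_i,\dots,v_j$ has a recovery mark in $[0,1]$ and each $v_l$ sends an arrow to $v_{l+1}$ during $[0,1]$. Then all their infection paths pass through $(v_j,1)$, so $\p(v_i,\dots,v_j\in X_\lambda)\ge c_0^{\,j-i+1}\rho(\lambda)$, with $c_0>0$ uniform for $\lambda$ near $\lambda_1$. Hence a run carries only one factor $\rho$, and any bound giving a long run a power of $\rho$ that grows with its length fails as $\rho\to0$.

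So making $\rho$ small buys nothing. The gain must be found at $\lambda=\lambda_1$ itself, from the fact that \emph{every} vertex of a run must escape. Your decomposition never uses this: it only charges one witness for travelling far. The paper proceeds as follows:
\begin{enumerate}
\item $\{e_1,\dots,e_k\}\subset X_\lambda$ forces each $e_i$ to have an infection path to $\{e_0,e_{k+1}\}$ or to infinity avoiding them.
\item These block events are independent over disjoint blocks, and at $\lambda_1$ the second alternative has probability $0$, which leaves $E_k$.
\item BK gives $\p(E_k)\le\sum_j\p(e_j\rightsquigarrow e_0)\,\p(e_{j+1}\rightsquigarrow e_{k+1})\le\frac{k+1}{(d-1)^k}$.
\item The other $k-4$ vertices must also infect a neighbour before recovering. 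Decoupling this from $E_{k,j}$ via the first two events $T_j<T_{j,2}$ of the exploration from the pivotal pair yields an extra factor $O(k^{-2})$, so $\p(E_k)<(d-1)^{-k}$ for large $k$.
\item Continuity of the block probability in $\lambda$ then gives some $\lambda>\lambda_1$ at which the first-moment count over paths still tends to $0$.
\end{enumerate}

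\textbf{Part (b).} The Galton--Watson construction fails on two counts.

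First, goodness of $v$ (survival inside its subtree) is determined by the marks in all of its children's subtrees. So goodness is not independent across generations, and the good set is not a Galton--Watson tree.

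Second, and decisively, the mean is far too small. A child is good with probability at most $\frac{\lambda}{1+\lambda}\cdot\frac{\lambda(d-1)}{1+\lambda(d-1)}$: it must be hit before $v$'s first recovery, and it must infect one of its own children before recovering. At $\lambda=\frac1{d-2}$ the mean is therefore at most $\frac{d-1}{2d-3}<1$. With your bound $\tilde\rho\ge1-\frac1{\lambda(d-2)}$ it is close to $0$ there, and for large $d$ the construction cannot work below roughly $\frac{1+\sqrt5}{2d}$. ``Counting re-infections'' would have to recover a constant factor, so it is the whole problem rather than a patch.

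The missing ingredients are the following.
\begin{itemize}
\item The exact identity $\te(D_\circ)=d-\frac1\lambda$. It follows from the first-step relation $\rho(\{\circ\})=\frac{\lambda d}{1+\lambda d}\rho(\{\circ,v\})$ and inclusion--exclusion, $\p(\circ,v\in X_\lambda)=2\rho(\{\circ\})-\rho(\{\circ,v\})$.
\item The Mass Transport Principle on the invariant subgraph $X_\lambda$, with mass sent from the centroid towards the leaves. It gives $\te(D_\circ)\le2-2\tp(D_\circ=0)$ unless infinite components exist.
\end{itemize}
Thus $d-\frac1\lambda>2-2\tp(D_\circ=0)$ forces percolation, which yields $\frac1{d-2}$ directly. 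The $d\ge7$ bound comes from the explicit estimate $\tp(D_\circ=0)\ge\frac1{181}$ on $\circ$ being isolated in $X_\lambda$, not from second-order branching terms.
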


%
%

\begin{remark}
	From 
	Theorem~\ref{theorem_lambda_p:b} and \cite[Theorem 2.2]{MR1188054}, we immediately deduce that $\lambda_p<\lambda_2$ on the $d$-regular tree when $d\ge 5$. We do not necessarily expect a general result on the comparison between $\lambda_p$ and $\lambda_2$, and in particular it is unclear to us how they compare when $d$ is $3$ or $4$. Note that for $d=2$ the inequality is in the reverse order by the identification of the $2$-regular tree with $\mathbb Z$ and previous discussion.	
\end{remark}

\begin{remark}
	By the bounds
	\[
	\frac 1{d-1}\le \lambda_1< \lambda_p\le \frac 1 {d-2}
	\]
	we see that that $\lambda_1$ and $\lambda_p$ are quite close and both asymptotically equivalent to $d^{-1}$ as $d\to +\infty$. It is then natural to question whether we can have more precise asymptotics for these thresholds. Actually, the more precise upper bound on $\lambda_1$ provided in \cite[Theorem 2.2]{MR1188054} gives
	\[1\le \liminf_{d\to\infty} d^2(\lambda_1-d^{-1})\le \limsup_{d\to\infty} d^2(\lambda_1-d^{-1})\le\frac 4 3,
	\]	
	but doesn't provide a second order asymptotic. Similarly, our upper bound on $\lambda_p$ for $d\ge 7$ gives
	\[1\le \liminf_{d\to\infty} d^2(\lambda_p-d^{-1})\le \limsup_{d\to\infty} d^2(\lambda_p-d^{-1})\le2- \frac 1 {91}.
	\]	
	Our proof could certainly be refined to replace the upper bound $2-1/91$ 
	with a smaller value,
	however we do not expect that it could be decreased up to $4/3$ and certainly not up to $1$.	
\end{remark}

\cite{MR4452653} also discusses the percolation threshold for the set of healthy vertices under the upper invariant measure, namely $\lambda_{p^\complement}$, defined by
\begin{align*}
\lambda_{p^\complement}(G)&=\inf\left\{
\lambda : \p\big(X_\lambda^\complement \textrm{ percolates}\big)=0
\right\}\\
&=\inf\left\{
\lambda : \p\big(\circ \textrm{ is in an infinite component of }X_\lambda^{\complement} \, \big)=0
\right\},
\end{align*}
where $X_\lambda^\complement=V \setminus X_\lambda$. 
Our results on $\lambda_p^\complement$ are as follows:

\begin{theorem}\label{theorem_lambda_p_complement}
	On the $d-$regular tree $\T$ with $d\ge 3$, the upper-invariant healthy vertices percolation threshold $\lambda_{p^\complement}$ satisfies:
	\begin{enumerate}[label=(\alph*), ref=\thetheorem(\alph*)]
		\item\label{theorem_lambda_p_complement:a} the lower bound 
		\[\lambda_1<\lambda_{p^\complement}
		\]
		\item\label{theorem_lambda_p_complement:b} the lower bound 
		\[
		1-\frac  2 d \le \lambda_{p^\complement}
		\]
		\item\label{theorem_lambda_p_complement:c} the upper bound 
		\[
		\lambda_{p^\complement}\le 1+\frac 1 {d-2}.
		\]
	\end{enumerate}
\end{theorem}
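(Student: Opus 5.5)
The plan treats the three parts separately. Part (b) comes from an explicit independent site percolation contained in $X_\lambda^\complement$. Part (a) comes from a finite-range approximation of $X_\lambda$ near $\lambda_1$. Part (c) needs a path-counting argument, which I expect to be the main obstacle.

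\emph{Part (b).} In the graphical representation, say that $v$ is \emph{quick-healing} if, after time $0$, the first Poisson mark among the recovery process at $v$ and the $d$ outgoing infection-arrow processes from $v$ is a recovery. Such a $v$ recovers before transmitting anything, so $\xi^{\{v\}}_t$ dies out and $v\notin X_\lambda$. The events $\{v \text{ quick-healing}\}$ use disjoint families of Poisson processes for distinct $v$, since outgoing arrows are attached to their tail vertex. Hence they are independent, and each has probability $p=1/(1+d\lambda)$. So $X_\lambda^\complement$ contains a Bernoulli$(p)$ site percolation on $\T$. Its cluster below a given vertex is a Galton--Watson tree with mean offspring $(d-1)p$, which is supercritical exactly when $(d-1)/(1+d\lambda)>1$, that is $\lambda<(d-2)/d=1-2/d$. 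This gives $\lambda_{p^\complement}\ge 1-2/d$.

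\emph{Part (a).} Part (b) already gives (a) whenever $1-2/d>1/(d-2)\ge\lambda_1$, i.e.\ for $d\ge5$, but I would use a uniform argument for all $d\ge3$. Fix $T,R$ and let $Y_v$ be the event that the process $\xi^{\{v\}}$ either reaches distance $R$ from $v$ before time $T$, or is still alive at time $T$ without having done so. The event $Y_v$ depends only on the graphical representation in the ball $B(v,R)$ over the time window $[0,T]$, and $\{v\in X_\lambda\}\subset Y_v$. The field $(\1_{Y_v})_v$ is therefore $2R$-dependent, and its density $q(\lambda,T,R)$ is continuous in $\lambda$ for fixed $T,R$.

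At $\lambda=\lambda_1$ the process dies a.s., because $\rho$ is continuous and $\rho(\lambda_1)=0$. Hence it visits only finitely many sites in finite time, so we can choose $T$ and then $R$ with $q(\lambda_1,T,R)$ arbitrarily small. By continuity in $\lambda$, $q(\lambda,T,R)$ stays small for $\lambda\in[\lambda_1,\lambda_1+\varepsilon]$. By the Liggett--Schonmann--Stacey domination theorem, the complement of a $2R$-dependent field of small enough density dominates a Bernoulli site percolation with parameter close to $1$, which percolates on $\T$. Since $X_\lambda^\complement$ contains the complement of $\{v:Y_v\}$, it percolates for all such $\lambda$, and so $\lambda_{p^\complement}\ge\lambda_1+\varepsilon>\lambda_1$.

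\emph{Part (c), the main obstacle.} Here we need an upper bound, so we must show that healthy paths are rare. I would bound the expected number of self-avoiding healthy paths $\circ=u_0,u_1,\dots,u_n$ going down the tree, of which there are $(d-1)^n$. The aim is to show $\p(u_0,\dots,u_n\notin X_\lambda)\le \theta^n$ for some $\theta<1/(d-1)$ when $\lambda>1+1/(d-2)$.

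The key observation to exploit is this. If $u_i$ sends an arrow to $u_{i+1}$ before its own recovery, then $u_i\notin X_\lambda$ forces the infection from $u_{i+1}$, restarted at that arrow time, to die. That event is in turn a ``healthy'' event for $u_{i+1}$ in a time-shifted graphical representation, and by the strong Markov property it looks stationary.

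I would try to set up a renewal-type exploration along the path. At each step one pays a factor of order $1/(1+\lambda)$ if recovery comes first, or passes the constraint forward otherwise. One then combines this with the $d-2$ side branches at each vertex: each healthy vertex must also fail to ignite survival through its other children, and by Harris--FKG these decreasing constraints can be multiplied. The target is a per-step factor whose product with $d-1$ is below $1$ exactly when $\lambda(d-2)>d-1$.

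Getting the bookkeeping to give precisely the threshold $1+1/(d-2)$ is where I expect the difficulty to lie. If it fails, I would settle first for a cruder threshold, such as $\lambda>d-2$ from the bound $(d-1)/(1+\lambda)<1$, and then refine it.
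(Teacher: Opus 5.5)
Your part (b) is the paper's argument. Your quick-healing vertices are exactly the paper's observation that a vertex of $X_\lambda$ must infect a neighbour before recovering, i.e.\ $\nu_\lambda\preceq\bigotimes_{\T}{\rm Ber}(\lambda d/(1+\lambda d))$. Part (c) has a genuine gap, and so does part (a) for $d=3$.

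\textbf{Part (c).} What you describe is not yet a proof, and the one concrete tool you invoke points the wrong way. The events $\{u_i\notin X_\lambda\}$ are decreasing, so Harris--FKG gives $\p\big(\bigcap_i\{u_i\notin X_\lambda\}\big)\ge\prod_i\p(u_i\notin X_\lambda)$, which is useless for an upper bound. The fallback $(d-1)/(1+\lambda)<1$ is also unjustified, since recovering before firing at $u_{i+1}$ is not necessary for $u_i\notin X_\lambda$.

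Your remark about the side branches is in fact the whole proof once the interaction along the path is discarded. Each $u_i$ has at least $d-2$ neighbours off the path; let $S_i$ be $u_i$ together with the branches of $\T$ behind $d-2$ of them, a copy of the paper's $\T'$. These sets are disjoint, and $u_i\notin X_\lambda$ implies that the infection from $u_i$ \emph{confined to} $S_i$ dies out. These enlarged events use disjoint marks, so they are independent and no correlation inequality is needed.

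In $\T'$ a finite set of $n$ vertices spans at most $n-1$ edges, so it has at least $(d-2)n$ boundary edges. Hence the number of infected vertices dominates a birth--death chain with rates $\lambda(d-2)n$ up and $n$ down, whose extinction probability is at most $1/(\lambda(d-2))$. A fixed path of length $n$ is therefore entirely healthy with probability at most $(\lambda(d-2))^{-n}$. The first moment $d(d-1)^{n-1}(\lambda(d-2))^{-n}$ then tends to $0$ exactly when $\lambda>1+1/(d-2)$.

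\textbf{Part (a).} Your reduction to (b) actually covers $d\ge4$, since $\lambda_1<1/(d-2)\le 1-2/d$ there. For $d=3$, however, you rely on the LSS step, and its quantifiers are reversed. The density tolerance in the Liggett--Schonmann--Stacey theorem depends on the range of dependence. For $2R$-dependent fields on $\T$ it is below $1/|B(v,R)|\le (d-1)^{-R}$: the field with exactly one uniformly placed $0$ in $B(v,R)$ and $1$'s elsewhere is $2R$-dependent, yet it dominates no nontrivial product measure. You choose $R$ only after fixing the tolerance.

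This cannot be repaired with your $Y_v$. First, $Y_v$ contains the event that $\xi^{\{v\}}$ ever reaches distance $R$: if this happens after time $T$, the process was alive at $T$. So $q(\lambda_1,T,R)$ is at least that probability for every $T$. Second, the straight infection runs, in which each newly infected vertex infects a child before its next recovery, form a Galton--Watson tree with mean $(d-1)\lambda_1/(1+\lambda_1)\ge (d-1)/d$. Since $(d-1)/d>1/(d-1)$, that probability decays no faster than order $((d-1)/d)^R$, far slower than the tolerance $(d-1)^{-R}$.

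The paper avoids any finite-range approximation. By H\"aggstr\"om's theorem, an automorphism-invariant site percolation on $\T$ with density close enough to $1$ percolates. The healthy density $1-\rho(\lambda)$ tends to $1$ as $\lambda\downarrow\lambda_1$ by continuity of the phase transition, which gives (a) directly.
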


%
%
%
%
%

\begin{remark}
	For $d=3$ we have $\lambda_1\ge \frac 1 2$ so the better lower bound on $\lambda_{p^\complement}$ is provided by Theorem~\ref{theorem_lambda_p_complement:a}. Instead, for $d\ge 4$, we have $\lambda_1<\frac 1 2$ and the better lower bound is provided by Theorem~\ref{theorem_lambda_p_complement:b}.
\end{remark}

\begin{remark}
	By  Theorem~\ref{theorem_lambda_p_complement:c} 
	and \cite[Part I Theorem 4.65]{MR1717346}, we immediately deduce that $\lambda_2<\lambda_{p^\complement}$ on the $d$-regular tree when $d\ge 7$. Again, it is unclear whether the same result holds true when $3\le d \le 6$.
\end{remark}

\begin{remark}
	Regarding the threshold exponents $\lambda_1, \lambda_2, \lambda_p, \lambda_{p^\complement}$ on $\T$ for large $d$, we have already discussed the following asymptotics:
	\begin{align*}
		\lambda_1, \lambda_p &\sim d^{-1}\\
		\lambda_{p^\complement}&\to 1.
	\end{align*}
	For completeness, we recall here that $\lambda_2$ is of order $d^{-1/2}$, thanks to the bounds on $\lambda_2$ provided in \cite{MR1717346},
	\[
	\frac 1 {2 \sqrt {d-1}}\le \lambda_2 \le \frac 1 {\sqrt {d-1}-1},
	\]
	and moreover from \cite[Theorem 2.2]{MR1188054} we have $\liminf_{d\rightarrow +\infty}\lambda_2\sqrt{d}\geq 2-\sqrt{2}$, but the correct asymptotic constant is yet unknown.
	\end{remark}

In Figure \ref{fig_curves} we show a simulation for the numerical value of the probability to be in an infinite component, either in the sense of a connected infected set or a connected healthy set. This simulation is based on the assumption that the density of the giant connected component in a uniformly selected regular graph is equivalent to the probability to be in an infinite component in the local limit of that random graph, i.e. the regular tree $\T$. Implicitly we also rely on the infection in each local environment being close to its metastable distribution at time $t=100$ (which we expect to happen in $O(1)$ time but the numerics are not clear). Regardless the figure suggests that both transitions are continuous, and it's possible as well that the probability to be in an infinite healthy component has a continuous derivative at $\lambda=\lambda_{p^\complement}$ while the probability to be in an infinite infected component at $\lambda=\lambda_{p}$ does not. We leave both questions as interesting open problems.

\begin{figure}[H]\centering
\includegraphics[width=0.8\textwidth]{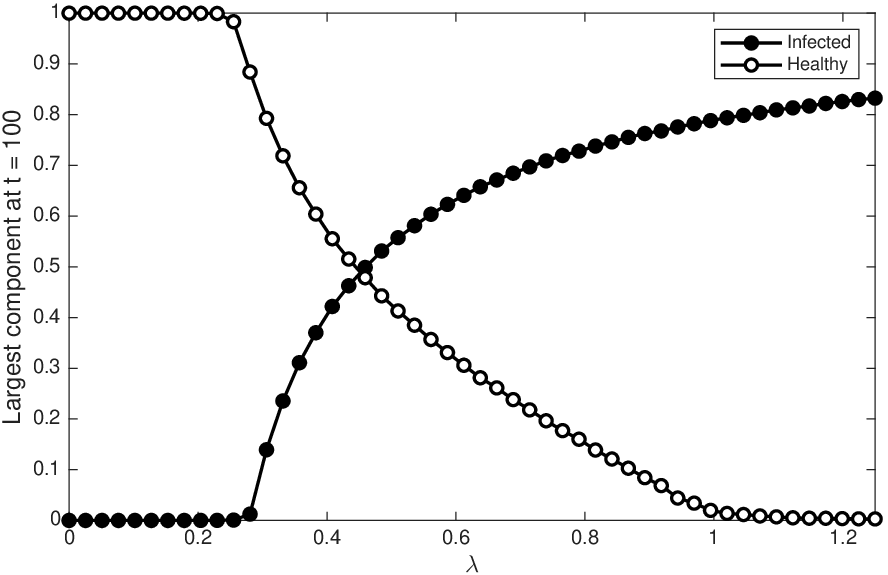}
\caption{$50$ trials on a uniform $5$-regular graph with $20,000$ vertices produce the above image, where we simulate the upper invariant infection by starting with every vertex infected and running to approximate metastability in $100$ units of time. Note that we can identify $\lambda_p$ in the bottom-left corner and $\lambda_{p^\complement}$ in the bottom-right. }\label{fig_curves}
\end{figure}

\section{Methods of proofs}

A natural technique to give bounds on $\lambda_p$ and $\lambda_{p^c}$ is to compare $\nu_\lambda$ with a product of Bernoulli  measures. Specifically, if we have the stochastic domination
\[
\nu_\lambda\succeq \bigotimes_{\T} {\rm Ber}\left(
p_\lambda
\right),
\]
with $p_\lambda> \frac 1 {d-1}$ (resp. $p_\lambda\ge \frac {d-2} {d-1}$), we immediately deduce that $X_\lambda$ percolates and thus $\lambda_p\le \lambda$ (resp. $X_\lambda^\complement$ does not percolate and thus $\lambda_{p^\complement}\le \lambda$).  \cite{MR2199800} shows the stochastic domination with $p_\lambda=1-4/\lambda$, showing finiteness of $\lambda_p$ and $\lambda_{p^\complement}$ (see Section 3, Example (c) where finiteness of $\lambda_p$ on $\T$ is stated explicitly), however this provides the bounds $\lambda_p\leq 4( \tfrac{d-1}{d-2} )$ and $\lambda_{p^\complement}\le 4(d-1)$, which are quite loose as compared to our results.

\cite{MR1457624} also proves that $X_\lambda$ percolates if the density of vertices in $X_\lambda$ is large enough, see \cite[Theorem 1.6]{MR1457624} for the case of bond percolation and last paragraph of \cite[Section 1]{MR1457624} for the case of site percolation. Beyond the upper-invariant measure of the contact process, the result applies to any automorphism invariant probability measure on $\{0,1\}^{\T}$.
Under the same assumption, however, an arbitrary low density of
 $1$'s is not sufficient to prevent percolation in general.

Let us now discuss briefly the main ingredients of the proofs of
Theorems~\ref{theorem_lambda_p} and~\ref{theorem_lambda_p_complement}:

\begin{itemize}
	\item 
	Theorem~\ref{theorem_lambda_p_complement:a} follows directly from~\cite{MR1457624} and the continuity of the phase transition at $\lambda_1$.
	\item 
	Theorem~\ref{theorem_lambda_p_complement:b}
	follows from the easy stochastic domination result
		\[
		\nu_\lambda\preceq \bigotimes_{\T} {\rm Ber}\left(
		\frac {\lambda d} {1+\lambda d}
		\right),
		\]
	obtained by the observation that a vertex in $X_\lambda$ must necessarily infect a neighbour before recovering. Thus healthy vertices percolate whenever 
	\[
	\frac {\lambda d}{1+\lambda d}<\frac {d-2}{d-1},
	\]
	yielding the result.
	\item For 
	Theorem~\ref{theorem_lambda_p_complement:c}, we consider $\nu_\lambda^{\mathbb Z}$ the marginal of $\nu_\lambda$ obtained by observing $X_\lambda$ only along a line in $\T$ (isomorphic to $\mathbb Z$, hence the notation), and prove
	\begin{equation} \label{line-SIRdomination}
	\nu_\lambda^{\mathbb Z}\succeq \bigotimes_{\mathbb Z} {\rm Ber}\left(
	1-\frac 1 {\lambda (d-2)}
	\right),
	\end{equation}
	by considering the infection process independently in each branch of $\T$  emanating from the vertices in the line.
	\item 
	The proof of Theorem~\ref{theorem_lambda_p:b} is more involved. We draw our inspiration from H\"aggstr\"om's approach in~\cite{MR1457624}, which relies on an application of the Mass Transport Principle on the  components of occupied vertices. In our case we can use the specific properties of the contact process to study the degree distribution of the vertices in those components. We obtain an exact formula for the first moment of this degree distribution, yielding the upper bound $1/(d-2)$ for $\lambda_p$, while a deeper analysis of the degree distribution allows to refine the upper bound for large $d$. Note that our argument does not require any control on the density of vertices in $X_\lambda$.
	
	Of course, using the density we could instead get upper bounds on $\lambda_p$ directly by the results of \cite{MR1457624}---however these would necessarily be weaker than the bounds of Theorem~\ref{theorem_lambda_p:b}. This is because we will see later that for $v\sim w$ and $\lambda>\lambda_1$ that
	\[
	\p(v,w\in X_\lambda)=\left(1-\frac{1}{\lambda d}\right)\p(v\in X_\lambda)\leq \left(1-\frac{1}{\lambda d}\right)^2.
	\]
	
	Therefore the H\"aggstr\"om condition 
	\[
	\left\{\lambda : \p(v,w\in X_\lambda)\geq \frac{2}{d}\right\}
	\subseteq
	\left\{\lambda : \left(1-\frac{1}{\lambda d}\right)^2\geq \frac{2}{d}\right\}
	=
	\left[\frac{1}{d-2}\left(1+ \sqrt{\frac{2}{d}}\right),\infty\right)
	\]
	gives an upper bound on $\lambda_p$ which is larger than $\tfrac{1}{d-2}$. H\"aggstr\"om's alternative vertex condition similarly gives a larger  upper bound than $\tfrac{1}{d-2}$
	\[
	\left\{\lambda : \p(v\in X_\lambda)\geq \frac{d}{2d-2}\right\}
	\subseteq
	\left[\frac{1}{d-2}\left(2- \frac{2}{d}\right),\infty\right).
	\]
	\item Finally, the proof of  
	Theorem~\ref{theorem_lambda_p:a} uses again the continuity of the phase transition at $\lambda_1$, and an explicit upper bound on the hitting probability of a vertex in the tree at $\lambda=\lambda_1$. A careful analysis of the infection trajectories allows to bound the probability that a segment of arbitrary length $k$ is entirely contained in $X_\lambda$ by $p_\lambda^k$, for some $p_\lambda$ which is strictly smaller than $1/(d-1)$ when $\lambda$ is sufficiently close to $\lambda_1$. We obtain this result without having any stochastic domination of $\nu_\lambda$ by a product of Bernoulli variables. 
\end{itemize}

At this point, the proofs of 
Theorems~\ref{theorem_lambda_p_complement:a} and \ref{theorem_lambda_p_complement:b} are already complete.
We prove 
Theorem~\ref{theorem_lambda_p_complement:c} in Section~\ref{sec:complement_upper_bound}, then 
Theorem~\ref{theorem_lambda_p:b} in Section~\ref{sec:upper_bound_MTP}, and finally 
Theorem~\ref{theorem_lambda_p:a} in Section~\ref{sec:lower_bound}.

\section{Upper bound on $\lambda_{p^\complement}$}
\label{sec:complement_upper_bound}

We start with the following lemma, to lower bound the survival probability in a particular subtree of $\T$. When $\lambda>\frac{1}{d-2}$, this is also a useful lower bound on the survival probability in the full tree.

\begin{lemma}
	For $d\ge 3$, consider the infinite tree $\T'$ where the root $\circ$ has degree $d-2$ and all other vertices have degree $d$ and thus $d-1$ children. Then the survival probability of the contact process starting with only the root infected is at least $1-1/ \lambda(d-2)$.
\end{lemma}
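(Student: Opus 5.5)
Tree $\T'$: root has $d-2$ children and every other vertex has $d-1$ children. We want the survival probability of the contact process from the root to be at least $1-1/(\lambda(d-2))$. The bound $1-1/(\lambda(d-2))$ is exactly the survival probability of a Galton--Watson process. Consider the branching process in which each infected vertex, before its recovery, sends infection to each child independently at rate $\lambda$, and each child once infected starts its own independent copy. The number of children of the root that receive an arrow before recovery has mean $\lambda(d-2)/(1+\lambda)$ per child structure, and that does not directly give the formula. So the better comparison is the following. The plan is to dominate from below by a process where we only count, for each infected vertex $v$, infections it sends to its children \emph{before} its first recovery. This is an SIR-like process on the tree, with each vertex infected at most once. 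The key difficulty is the dependence among children: they share the recovery time of $v$.

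The cleanest route is to instead view the embedded structure as a branching random walk in which the root has infinitely many "lives". Concretely, I would use the following comparison. Let $T\sim{\rm Exp}(1)$ be the recovery time of the root. Each of the $d-2$ children is infected before $T$ with probability $\lambda/(1+\lambda)$; these events are dependent through $T$. For a non-root vertex $w$, the parent is already infected, so I ignore back-infection and declare survival if the subtree process of $w$ (which has $d-1$ children, but one can discard one child to get a copy of $\T'$ of degree $d-2$ at the root) survives. This gives a recursion: $q=$ extinction probability satisfies
\[q\le \e\big[(1-(1-q)(1-e^{-\lambda T}))^{d-2}\big],\]
using that the children infected before $T$ are conditionally i.i.d.\ given $T$. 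One then wants to show the smallest fixed point $q^*$ of the right-hand side $\phi(q)$ satisfies $1-q^*\ge 1-1/(\lambda(d-2))$. This is where I am uncertain: $\phi'(1)=(d-2)\lambda/(1+\lambda)$, which gives the wrong threshold. So this SIR truncation is too lossy, and I must use repeated infections.

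Better plan: count for the root the \emph{total number} of infection arrows it sends to a fixed child during $[0,T)$. This is geometric with mean $\lambda$. Then compare with a Galton--Watson tree whose offspring distribution is the multiset of arrows, i.e., treat each arrow as a separate independent individual (a branching process where each arrow spawns a fresh independent copy). Offspring count per individual is $N=\sum_{i=1}^{d-2}G_i$ with $G_i$ counting arrows to child $i$ in a lifetime ${\rm Exp}(1)$. Given $T$, $N$ is Poisson$(\lambda(d-2)T)$, so $N$ is geometric with $\p(N\ge k)=(\mu/(1+\mu))^k$ and $\mu=\lambda(d-2)$. The extinction probability of a Galton--Watson process with geometric offspring is $1/\mu$ when $\mu>1$. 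This is the target bound.

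The main obstacle is justifying the coupling: the contact process survives if this branching structure does. I would do it via the standard argument that the contact process on a tree dominates the process where each vertex uses its own restricted subtree. Assign to each individual the pair (vertex, time). Survival of the branching process gives infinitely many infection paths into ever deeper vertices, but distinct individuals at the same vertex share Poisson clocks, so they are not independent. To circumvent this, I would prove monotonicity of the survival probability in a "path count" sense: the contact process survival probability is at least that of the branching random walk by a generating-function argument. The one-vertex recursion $1-q\ge$ the BRW recursion should be derived via the self-duality/subadditivity of $\p(\xi^{A}\text{ dies})\le\prod_{a\in A}\p(\xi^{\{a\}}\text{ dies})$, which follows from the FKG/BK-type inequality (Harris) for increasing events.

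Then iterate on depth to close the argument. The fixed point comparison of the geometric generating function $f(s)=1/(1+\mu(1-s))$, whose nontrivial fixed point is $1/\mu$, finishes the proof.
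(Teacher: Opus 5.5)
Your proposal has a genuine gap: the comparison between the contact process and your Galton--Watson tree is the entire content of the lemma, and your justification for it fails. The inequality $\p(\xi^A\text{ dies})\le\prod_{a\in A}\p(\xi^{\{a\}}\text{ dies})$ goes the wrong way. The event $\{\xi^A\text{ dies}\}$ is the intersection $\bigcap_a\{\xi^{\{a\}}\text{ dies}\}$ of decreasing events, so Harris--FKG gives $\ge$, not $\le$; BK says nothing about such plain intersections. More fundamentally, lineage-by-lineage comparisons with a branching random walk run in the opposite direction. The contact process is \emph{dominated} by the branching random walk, because an arrow landing on an already infected site is wasted, whereas in your tree every arrow spawns a fresh independent individual. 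Your scheme in fact proves too much. Your offspring count $N=\sum_{i=1}^{d-2}G_i$ only ever uses downward arrows into $d-2$ children of each vertex, so the argument would apply verbatim to the contact process on that $(d-2)$-ary subtree. For $d=3$ this subtree is a half-line. There the contact process is dominated by the one on $\mathbb Z$ and dies out for $\lambda$ below $\lambda_c(\mathbb Z)\approx 1.65$. Yet your geometric Galton--Watson tree survives with probability $1-1/\lambda>0$ as soon as $\lambda>1$.

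The missing idea is to compare the \emph{number} of infected sites rather than individual lineages, and to use the isoperimetry of trees to pay for the wasted arrows. A finite $X\subset\T'$ with $n$ vertices spans $k\le n-1$ internal edges, so the number of edges between $X$ and its complement is
\[
dn-2\1_{\circ\in X}-2k\ \ge\ (d-2)n .
\]
Hence $|\xi_t|$ jumps up at rate at least $\lambda(d-2)n$ and down at rate exactly $n$. It therefore dominates the birth--death chain with these rates, whose extinction probability from $1$ is $1/(\lambda(d-2))$ by gambler's ruin; this is the paper's proof. That chain is exactly the population-size process of your geometric Galton--Watson tree, so your target number was right. What makes it valid for the contact process is using the full degree $d$ of non-root vertices rather than the $d-2$ edges your individuals use. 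The surplus $2n-2\1_{\circ\in X}\ge 2(n-1)$ in the degree sum is precisely what covers the $2k$ endpoints of internal edges. Discarding a child at every generation throws that margin away.
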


\begin{proof}
	Consider any configuration with a finite set $X\subset \T'$ of infected vertices. If $X$ contains $n$ vertices and $k$ internal edges, necessarily with $k\le n-1$, then the number of edges between $X$ and $X^\complement$ is
	\[
		\vert \partial X\vert = d n - 2k - 2 \1_{\circ \in X}\ge (d-2)n.
	\]
	Thus we can lower bound the number of infected vertices by a birth-death chain with
	\[
	\begin{split}
		Q_{n,n+1}&=\lambda n(d-2)\\
		Q_{n,n-1}&=n,
	\end{split}
	\]
	from which we immediately deduce that the extinction probability is at most $1/\lambda(d-2)$.
\end{proof}

Consider now an infinite line $\mathbb Z\subset \T$. Each vertex in this line belongs to a disjoint copy of $\T'$. You thus get~\eqref{line-SIRdomination} by finding survival independently for each vertex in its copy of $\T'$.

Further, a first moment method yields that the expected number of vertices in $\T$ at distance $n$ from the root and connected to the root in $X_\lambda^\complement$ is bounded by
\[
d(d-1)^{n-1}
\left(
\frac{1}{\lambda(d-2)}
\right)^n.
\]
This term tends to 0 as $n$ tends to infinity when
\[
\frac{1}{\lambda(d-2)}
<
\frac{1}{d-1}
\iff
\lambda > 1+\frac{1}{d-2}.
\]
Under this condition, the probability that the root is in an infinite component of healthy vertices is 0, and thus $\lambda\ge \lambda_{p^\complement}$. The result Theorem~\ref{theorem_lambda_p_complement:c} follows.

\section{Upper bounds on $\lambda_p$}
\label{sec:upper_bound_MTP}

\subsection{Bound for $d \geq 3$ by the Mass Transport Principle}

First in this section, we detail a mass transport principle to obtain an exact expression for the expected degree within both finite and infinite components. After we will show that the mean is actually explicit and thus obtain a lower bound on $\lambda_p$.

Observe that the law of $X_\lambda$ is invariant under the isometries of $\T$, which allows to express and use a version of the Mass Transport Principle. Before doing so, we introduce some notation for the graph. Identify $X_\lambda$ with the subgraph of $\T$ induced by keeping only the vertices in $X_\lambda$. For a vertex $v$ in $X_\lambda$, write 
\[D_v= \sum_{w \sim v} \1_{w\in X_\lambda}\]
for its degree in $X_\lambda$. Write $C_v=C_v(X_\lambda)$ for the connected component of $v$ in this graph, which is thus a (finite or infinite) tree. For an edge $w\sim v \in X_\lambda$, write $C_{v\backslash w}=C_v(X_\lambda \setminus\{w\})$ for the component of $v$ obtained by removing $w$, so that the full component $C_v$ is recovered by taking the union of the two disjoint trees $C_{v\backslash w}$ and $C_{w\backslash v}$ and adding the edge $\{v,w\}$.

The edge $\{v,w\}$ is called the \emph{centroid edge} of $C_v$ if $C_v$ is a finite tree and
\[
|C_{v\backslash w}|=|C_{w\backslash v}|,
\]
namely the two trees contain the same number of vertices. A vertex $v$ is the \emph{centroid vertex} if $C_v$ is a finite tree and the trees $C_{v\backslash w}$ all contain strictly fewer than half the vertices of $C_v$. A finite tree contains either a centroid vertex or a centroid edge, while an infinite tree has neither. Finally, we write
\[D^\infty_v:=\#\{w\sim v \text{ in } X_\lambda, |C_{w\backslash v}|=\infty\}\]
for the number of infinite branches at $v$.

\begin{proposition}\label{prop:ApplyMTP}
	Write $\tp{}$ for the probability measure $\p$ conditionally on $\circ\in X_\lambda$. We have:
	\begin{multline*}
	\te(D_\circ)=2-2 \tp(\circ \text{ is the centroid vertex of }C_\circ)\\-\tp(\circ \text{ belongs to the centroid edge of }C_\circ) +\te\big((D^\infty_\circ-2)_+\big).
	\end{multline*}
\end{proposition}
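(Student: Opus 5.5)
The plan is to apply the Mass Transport Principle on $\T$ to a transport that moves mass along each edge of $X_\lambda$ towards the ``larger side'' of that edge. The automorphism group of $\T$ is transitive and unimodular, and the law of $X_\lambda$ is invariant under it, so the MTP holds: for any nonnegative $m(u,v;\omega)$ invariant under simultaneous automorphisms,
\[\e\sum_v m(\circ,v)=\e\sum_u m(u,\circ).\]

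\textbf{The transport.} For adjacent $v,w\in X_\lambda$, let $v$ send mass $1$ to $w$ if $|C_{w\backslash v}|>|C_{v\backslash w}|$, with $\infty$ counting as larger than any finite number. Let $v$ send mass $\tfrac12$ to $w$ if $|C_{w\backslash v}|=|C_{v\backslash w}|$; this includes the case where both sides are infinite. In every other case no mass is sent. This rule depends only on $X_\lambda$ and is automorphism-equivariant.

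\textbf{Counting in and out.} Across each edge $\{v,w\}$ of $X_\lambda$ the total mass exchanged is exactly $1$, either $1+0$ or $\tfrac12+\tfrac12$. Summing over the $D_\circ$ edges at $\circ$, and writing $\mathrm{out}(\circ)$ and $\mathrm{in}(\circ)$ for the mass sent and received by $\circ$, we get
\[\mathrm{out}(\circ)+\mathrm{in}(\circ)=D_\circ \quad\text{on } \{\circ\in X_\lambda\}.\]
Both quantities vanish off this event and are bounded by $d$. The MTP gives $\e[\mathrm{out}(\circ)]=\e[\mathrm{in}(\circ)]$. Dividing by $\p(\circ\in X_\lambda)$, which is positive whenever the conditioning is meaningful, yields
\[\te(D_\circ)=2\,\te(\mathrm{out}(\circ)).\]

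\textbf{Computing $2\,\mathrm{out}(\circ)$.} This is a case analysis on the component $C_\circ$.

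If $C_\circ$ is finite, the branches at $\circ$ have sizes summing to $|C_\circ|-1$, so at most one branch can be strictly larger than the rest of the tree.
\begin{itemize}
\item If $\circ$ is the centroid vertex, every branch is smaller than the rest, so $\mathrm{out}=0$.
\item If $\circ$ is an endpoint of the centroid edge, the only mass sent is $\tfrac12$ across that edge.
\item Otherwise exactly one branch exceeds half of $C_\circ$ (the one pointing towards the centroid), and no branch ties, so $\mathrm{out}=1$.
\end{itemize}
In the finite case this gives
\[2\,\mathrm{out}(\circ)=2-2\cdot\1_{\{\circ\text{ centroid vertex}\}}-\1_{\{\circ\in\text{centroid edge}\}}.\]

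If $C_\circ$ is infinite, let $k=D^\infty_\circ$. Since degrees are finite, $k\ge1$.
\begin{itemize}
\item If $k=1$, the unique infinite branch $w$ has $C_{\circ\backslash w}$ finite. Hence $\circ$ sends $1$ to $w$ and nothing through its finite branches, so $2\,\mathrm{out}=2$.
\item If $k\ge2$, then for each infinite branch $w$ the complement $C_{\circ\backslash w}$ contains another infinite branch. Each such edge therefore ties at $\infty$ and carries $\tfrac12$, while finite branches carry nothing, so $2\,\mathrm{out}=k$.
\end{itemize}
In both subcases $2\,\mathrm{out}(\circ)=2+(D^\infty_\circ-2)_+$. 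This is consistent with the finite case, where $D^\infty_\circ=0$ and the indicator terms vanish for infinite components.

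Taking $\te$ of the combined identity gives the proposition. The only point requiring care is justifying the MTP for the conditional version and for the tie at $\infty$. The former is handled by working with unconditioned expectations carrying the indicator $\1_{\{\circ\in X_\lambda\}}$, which is built into $m$. The latter is handled because equivariance of $m$ only needs the rule to be a measurable function of $X_\lambda$ that commutes with automorphisms, which it is.
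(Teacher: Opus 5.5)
Your proof is correct and is essentially the paper's argument: the Mass Transport Principle on $\T$, with mass moved along edges of $X_\lambda$ according to how $|C_{v\backslash w}|$ compares with $|C_{w\backslash v}|$, followed by the same case analysis (centroid vertex, centroid edge, or neither; one versus several infinite branches). The only difference is bookkeeping: the paper sends unit mass towards the smaller side and keeps the remainder at $v$ as self-mass, so the outgoing mass is exactly $D_v$, whereas you send mass towards the larger side with halves on ties and use $\mathrm{in}+\mathrm{out}=D_\circ$; both reduce to the same identity $\te(D_\circ)=\te\big(2\#\{w\sim\circ: |C_{w\backslash\circ}|>|C_{\circ\backslash w}|\}+\#\{\text{ties}\}\big)$.
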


In particular, if $\te(D_\circ)>2$ then we must have positive probability of an infinite component at $\circ$.

\begin{proof}
	The result relies only on the Mass Transport Principle (MTP) satisfied by $X_\lambda$. We use the transport function $f(v,w)$ defined as follows:
	\[
	f(v,w)=
	\begin{cases*}
		1& if $v \sim w \in X_\lambda$ and $|C_{v\backslash w}|>|C_{w\backslash v}|,$\\
		D_v- \#\{u\sim v, |C_{v\backslash u}|>|C_{u\backslash v}|\}  & 
		 if  $w=v\in X_\lambda,$\\
		0 &else,
	\end{cases*}
	\]
	where in the first case it should be understood  that $f(v,w)=1$ also in the case that only $|C_{v\backslash w}|$ is infinite, but not in the case that both are infinite. 
	By the definition of $f(\circ,\circ)$, the total mass sent by $\circ$ is
	$\sum_v f(\circ,v)= D_\circ\1_{\circ \in  X_\lambda}$, while it is not hard to see that the total mass received by $\circ$ is:
	\[
	\sum_v f(v,\circ)=\begin{cases*}
		0 &if $\circ$ is in  $X_\lambda$ and is the centroid of $C_\circ$,\\
		1 & if $\circ$ is in $X_\lambda$ and belongs to the centroid edge of $C_\circ$,\\
		D^\infty_\circ \vee 2&in all other cases with $\circ\in X_\lambda$,\\
		0 & else.
	\end{cases*}
	\]
	By the Mass Transport Principle, $\e\big(\sum_v f(\circ,v)\big)=\e\big(\sum_v f(v,\circ)\big)$. Writing down this equality and dividing by the probability of $\circ \in X_\lambda$, we immediately get Proposition~\ref{prop:ApplyMTP}.
\end{proof}

\begin{remark}
	In finite components the mass is sent from the centroid (vertex or edge) towards the leaves, and every vertex but the centroid receives mass 2. 
	We could replace ``centroid'' by ``center'' by considering the depths of the trees $C_{v\backslash w}$ and $C_{w\backslash v}$ rather than  their number of vertices. 
	We could also dispatch this defect over the whole component, so we have three equivalent expressions:
	\begin{align*}
		2\te\left(\frac 1 {|C_\circ |}\right)&=2 \tp(\circ \text{ is the centroid of }C_\circ)+\tp(\circ \text{ belongs to the centroid edge of }C_\circ)\\
		&= 2 \tp(\circ \text{ is the center of }C_\circ)+\tp(\circ \text{ belongs to the center edge of }C_\circ).
	\end{align*}
	However, in view of our application of Proposition~\ref{prop:ApplyMTP}, none of the three expressions seems to be more useful than the others.
\end{remark}

Surprisingly, we can find an exact expression for the mean degree $\te(D_\circ)$.

\begin{proposition}\label{prop_mean_degree}
If $\lambda>\lambda_1$ and $d\geq 2$ we find
\[
\te(D_\circ)=d-\frac{1}{\lambda}.
\]
\end{proposition}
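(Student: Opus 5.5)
We need $\te(D_\circ)=d\,\p(\circ,w\in X_\lambda)/\p(\circ\in X_\lambda)$ for a fixed neighbour $w$. So the claim is equivalent to
\[
\p(\circ\in X_\lambda,\ w\in X_\lambda)=\Big(1-\frac{1}{\lambda d}\Big)\rho(\lambda),
\]
as already announced in the methods section. Equivalently, $\p(\circ\in X_\lambda,\, w\notin X_\lambda)=\rho/(\lambda d)$, summed over the $d$ neighbours. The plan is to obtain this identity from a first-step (first-event) decomposition of the process $\xi^{\{\circ\}}$ at the first event of the root's clocks.

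\textbf{First step.} Start from $\circ$ alone infected. The first event among the recovery clock of $\circ$ (rate 1) and the $d$ arrows out of $\circ$ (total rate $\lambda d$) is one of two kinds.
\begin{itemize}
\item A recovery, with probability $1/(1+\lambda d)$; then the infection dies.
\item An infection of a uniformly chosen neighbour $w$; then $\{\circ,w\}$ is infected.
\end{itemize}
By the strong Markov property and additivity of the graphical representation, the infection survives iff the infection started from $\{\circ,w\}$, read on the shifted graphical representation, survives. Hence
\[
\rho=\frac{\lambda d}{1+\lambda d}\,\p(\text{survival from }\{\circ,w\}).
\]
By translation invariance of the graphical representation in time, survival from $\{\circ,w\}$ has the same law as the event $\{\circ\in X_\lambda\}\cup\{w\in X_\lambda\}$. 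Inclusion–exclusion then gives
\[
\rho=\frac{\lambda d}{1+\lambda d}\big(2\rho-\p(\circ,w\in X_\lambda)\big),
\]
which rearranges to $\p(\circ,w\in X_\lambda)=\rho\,(1-1/(\lambda d))$. Dividing by $\rho>0$ (valid since $\lambda>\lambda_1$) and multiplying by $d$ yields $\te(D_\circ)=d-1/\lambda$.

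\textbf{Main obstacle.} The delicate point is justifying that survival after the first arrow is exactly the union event for two fresh vertices. The shift is at a stopping time $\tau$, so the post-$\tau$ Poisson processes are again a graphical representation independent of $\mathcal F_\tau$. Survival of $\xi^{\{\circ\}}$ is determined by the post-$\tau$ evolution of $\{\circ,w\}$, and additivity gives $\xi^{\{\circ,w\}}=\xi^{\{\circ\}}\cup\xi^{\{w\}}$. Survival of a union equals survival of at least one part. Together these give the identification; the joint law of the pair of events matches that of $(\circ\in X_\lambda,\,w\in X_\lambda)$.

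Note that the argument uses only that $\circ$ has exactly $d$ neighbours and that the setting is transitive. It therefore holds for $d\ge 2$, and on any regular transitive graph.
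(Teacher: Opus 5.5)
Your proof is correct and follows the paper's argument: the first-event decomposition $\rho(\{\circ\})=\frac{\lambda d}{1+\lambda d}\rho(\{\circ,v\})$, then inclusion--exclusion $\p(\circ,v\in X_\lambda)=2\rho(\{\circ\})-\rho(\{\circ,v\})=(1-\frac{1}{\lambda d})\rho(\{\circ\})$, then summing over the $d$ neighbours. (For your closing remark on general vertex-transitive graphs, run the same computation summed over all neighbours rather than for one fixed $w$, since neighbours need not be equivalent there.)
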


\begin{proof}
We write in this section $\rho(\{\circ\})=\p({\circ\in X_{\lambda}})$ for the survival probability of the infection from $\circ$, and $\rho(\{\circ,v\})$ for the survival probability from $\{\circ,v\}$.  Observe by considering the first infection or recovery that if $v\sim \circ$ then
\[
\rho(\{\circ\})
=
\sum_{w\sim \circ}
\frac{\lambda}{1+\lambda d}\,
\rho(\{\circ,w\})
=
\frac{\lambda d}{1+\lambda d}\,
\rho(\{\circ,v\}).
\]

Moreover by inclusion--exclusion
\[
\p({\circ,v\in X_{\lambda}})
=2\rho(\{\circ\})-\rho(\{\circ,v\})
=\left(
1-\frac{1}{\lambda d}
\right)\rho(\{\circ\}).
\]

It follows that
\[
\te(D_\circ)
=\sum_{v\sim \circ}\frac{\p({\circ,v\in X_{\lambda}})}{\p({\circ\in X_{\lambda}})}
=d\left(
1-\frac{1}{\lambda d}
\right),
\]
as claimed.
\end{proof}

We next use this mean degree, with the mass transport principle, to argue that there is an infinite component.

\begin{corollary}\label{cor:perc}
On $\T$ with $d\geq 3$ and
$\displaystyle
\lambda>
\frac{1}{d-2+2\tp(D_\circ=0) 
}
$ we must have infinite components with positive probability.
\end{corollary}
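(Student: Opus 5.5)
We argue by contradiction, combining Proposition~\ref{prop:ApplyMTP} with Proposition~\ref{prop_mean_degree}. Suppose $\lambda$ exceeds the stated threshold but, almost surely, there are no infinite components. Then $D^\infty_\circ=0$ almost surely, so the term $\te\big((D^\infty_\circ-2)_+\big)$ in Proposition~\ref{prop:ApplyMTP} vanishes. Since $\lambda>\frac{1}{d-2+2\tp(D_\circ=0)}\ge \frac{1}{d}\cdot\frac{d}{d-2+2}$ and $\lambda>\frac1d\ge\lambda_1$ fails in general, we should check $\lambda>\lambda_1$ first. If $\lambda\le\lambda_1$ then $\rho(\lambda)=0$ and $\tp$ is not defined, so the hypothesis is read as implicitly requiring $\lambda>\lambda_1$. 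Indeed, a positive conditioning event is needed for $\tp(D_\circ=0)$ to make sense. With $\lambda>\lambda_1$, Proposition~\ref{prop_mean_degree} gives $\te(D_\circ)=d-1/\lambda$.

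The key step is a lower bound on the defect terms $2\tp(\circ\text{ centroid vertex})+\tp(\circ\in\text{centroid edge})$ via isolated vertices. If $D_\circ=0$, then $C_\circ=\{\circ\}$ is a one-vertex tree. Its unique vertex is vacuously the centroid vertex, since there are no branches $C_{\circ\backslash w}$. Hence $\tp(\circ\text{ is the centroid vertex})\ge \tp(D_\circ=0)$. The remaining defect term is nonnegative and enters with a minus sign, so Proposition~\ref{prop:ApplyMTP} yields
\[
d-\frac1\lambda=\te(D_\circ)\le 2-2\tp(D_\circ=0).
\]
Rearranging gives $\frac1\lambda\ge d-2+2\tp(D_\circ=0)$, that is, $\lambda\le \frac{1}{d-2+2\tp(D_\circ=0)}$. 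This contradicts the hypothesis. So with positive probability $D^\infty_\circ>0$, and infinite components exist.

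The only delicate point is the justification that the expression in Proposition~\ref{prop:ApplyMTP} applies with $D^\infty_\circ\equiv0$. Strictly, we only need $\te\big((D^\infty_\circ-2)_+\big)=0$, which holds whenever $D^\infty_\circ\le2$. The contradiction argument, however, uses the stronger assumption of no infinite components at all. A more careful alternative avoids contradiction entirely. Proposition~\ref{prop:ApplyMTP} always gives $\te\big((D^\infty_\circ-2)_+\big)\ge d-\tfrac1\lambda-2+2\tp(D_\circ=0)>0$. Hence $\p(D^\infty_\circ\ge3)>0$ directly, which certainly gives infinite components. I would present this direct version, after checking the definition of the centroid for the single-vertex component and the regime $\lambda>\lambda_1$.
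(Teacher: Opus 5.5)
Your direct version is exactly the paper's proof: an isolated root is (vacuously) the centroid vertex, so dropping the nonnegative centroid-edge term in Proposition~\ref{prop:ApplyMTP} and inserting $\te(D_\circ)=d-\frac1\lambda$ from Proposition~\ref{prop_mean_degree} gives $\te\big((D^\infty_\circ-2)_+\big)\ge d-\frac1\lambda-2+2\tp(D_\circ=0)>0$. Your remark that the hypothesis implicitly requires $\lambda>\lambda_1$ is correct; it is needed so that $\tp$ is defined and Proposition~\ref{prop_mean_degree} applies, and the paper leaves it implicit. The sentence comparing $\lambda$ with $1/d$ is muddled, however, and can simply be deleted.
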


\begin{proof}
We just apply Proposition \ref{prop:ApplyMTP}.
Note that if $D_\circ=0$ then $\circ$ is necessarily a centroid, and so
\[
\te(D_\circ)-2+2 \tp(D_\circ=0) \leq \te\big((D^\infty_\circ-2)_+\big).
\]

From Proposition \ref{prop_mean_degree} above we know that in fact $\te(D_\circ)=d-\frac{1}{\lambda}$. So, by rearranging, the right-hand side $\te\big((D^\infty_\circ-2)_+\big)$ must be positive with the claimed condition, showing the existence of infinite components with positive probability.
\end{proof}

\begin{remark}
	Actually more can be said regarding the number of topological ends of the infinite components, based on the simple observation that $(D^\infty_\circ-2)_+$ can be nonzero only if $\circ$ belongs to an infinit component with at least three topological ends. By \cite{MR1457624}, the number of topological ends must be 1, 2 or infinity, thus we find a component with infinitely many topological ends. By \cite[Theorem 1.2]{MR1457624}, we are also finding infinitely many components with infinitely many ends.


	
	It seems impossible for $1$- or $2$-ended components to occur in the upper invariant measure and \cite[Theorem 1.3]{MR1457624} gives a condition for this result, but this condition seems hard to verify. Further,  if it is of interest, \cite[Lemma 2.3]{MR1457624} gives a path to an explicit lower bound on the growth rate of all these infinite-ended components.
\end{remark}

\subsection{Improved bound for $d\geq 7$}

We consider $d\ge 7$ and restrict $\lambda$ to the interval $[\tfrac{1}{d-1},\tfrac{1}{d-2}]$ where we know we can find $\lambda_p$. We aim to show the lower bound $\tp(D_\circ=0)\geq {1}/{181}$, which with Corollary~\ref{cor:perc} will prove the refined upper bound since $2/181>1/91$.



\begin{lemma}\label{lem_root_occupation}
When $\lambda\leq\frac{1}{d-2}$ and $d\geq 7$ we have
\[
\e_\circ(\text{infectious period of }\circ)
\leq
1+
\frac{2 d}{(d-1) \left(d-2 \sqrt{d-1}+2\right)}.
\]
\end{lemma}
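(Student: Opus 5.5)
The plan is to dominate the contact process by a branching random walk and read off the root's expected infection time from a Green function of simple random walk on $\T$. I read $\e_\circ(\text{infectious period of }\circ)$ as the expected total time the root spends infected, for the process started from $\{\circ\}$.

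\textbf{Domination.} Using the graphical representation, I would couple $\xi^{\{\circ\}}_t$ with a branching random walk $(Z_t)$ on $\T$ in which:
\begin{itemize}
\item each particle dies at rate $1$;
\item each particle sends a child to each neighbouring vertex at rate $\lambda$, with no restriction to healthy targets.
\end{itemize}
The standard coupling gives $\xi_t^{\{\circ\}}(x)\le Z_t(x)$ for all $x$ and $t$. Hence
\[
\e_\circ(\text{infectious period of }\circ)\le \int_0^\infty \e\, Z_t(\circ)\,\de t .
\]

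\textbf{Generations.} Each particle lives an $\mathrm{Exp}(1)$ time, so it spends expected time $1$ at its site. It has on average $\lambda d$ children, each placed at a uniformly chosen neighbour. Indexing particles by generation, the expected number of generation-$n$ particles at $\circ$ is $(\lambda d)^n p_n(\circ,\circ)$, where $p_n$ is the $n$-step return probability of simple random walk on $\T$. Therefore
\[
\int_0^\infty \e Z_t(\circ)\,\de t=\sum_{n\ge0}(\lambda d)^n p_n(\circ,\circ)=G(\lambda d),
\]
with $G$ the Green generating function of simple random walk. The $n=0$ term is the leading $1$ in the claimed bound.

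\textbf{Convergence.} The spectral radius of simple random walk on $\T$ is $2\sqrt{d-1}/d$, so $G(z)$ is finite for $z<d/(2\sqrt{d-1})$. For $\lambda\le \frac1{d-2}$ we need $\lambda d\le d/(d-2)$ to lie in this range, i.e. $2\sqrt{d-1}<d-2$. This holds for $d\ge7$ and fails at $d=6$, which explains the restriction in the statement. Since $G$ is increasing in $z$, it suffices to evaluate at $z=d/(d-2)$.

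\textbf{Evaluating $G$.} I would use the usual first-passage decomposition on the tree. Let $\phi(z)$ be the generating function of the first hitting time of a vertex's parent, starting from that vertex. It solves the quadratic
\[
\phi=\frac zd\Bigl(1+(d-1)\phi^2\Bigr),
\]
taking the smaller root. The first-return function is $F(z)=z\phi(z)$ and $G=1/(1-F)$. Substituting $z=d/(d-2)$ gives an explicit expression involving $\sqrt{(d-2)^2-4(d-1)}=\sqrt{d^2-8d+8}$. The last step is to bound $G-1=F/(1-F)$ above by $\frac{2d}{(d-1)(d-2\sqrt{d-1}+2)}$.

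\textbf{Main obstacle.} I expect this last algebraic comparison to be the main difficulty, because the target form $(\sqrt{d-1}-1)^2+2$ in the denominator does not obviously match the exact Green function. I would first try rationalising $1-F$. If that does not work, I would try replacing the exact root $\phi$ by a cruder upper bound, obtained by expanding around the critical point $z_c=d/(2\sqrt{d-1})$, where $\phi(z_c)=1/\sqrt{d-1}$. The domination and Green-function steps themselves are routine.
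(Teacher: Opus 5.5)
\textbf{The last step fails at $d=7$.} Everything up to the evaluation of $G$ is correct. That includes the coupling, the identity $\int_0^\infty \e Z_t(\circ)\,\de t=G(\lambda d)$ with $G(z)=\frac{2(d-1)}{d-2+\sqrt{d^2-4(d-1)z^2}}$, and the monotone reduction to $z=d/(d-2)$. The gap is the final comparison, which is false at $d=7$. There $z=7/5$ and $d^2-4(d-1)z^2=49/25$, so $\phi=1/3$, $F=7/15$ and
\[
G\left(\tfrac75\right)-1=\tfrac78,\qquad\text{whereas}\qquad \frac{2d}{(d-1)(d-2\sqrt{d-1}+2)}\Big|_{d=7}=\frac{7}{3(9-2\sqrt6)}\approx 0.569 .
\]

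\textbf{Why algebra cannot repair it.} $G(\lambda d)$ is the \emph{exact} expected occupation time of the root by the branching random walk, not an estimate of it. So rationalising $1-F$ cannot help, and replacing $\phi$ by a cruder upper bound only makes $G$ larger. The loss is in the domination step itself. At $d=7$, $\lambda d=7/5$ lies within about $2\%$ of the walk's local-survival threshold $d/(2\sqrt{d-1})=7/(2\sqrt6)$. The branching random walk therefore heavily overcounts returns to $\circ$, including arrows that hit $\circ$ while it is still infected, which are wasted in the contact process. To cover $d=7$ you need a genuinely contact-process input of this kind.

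\textbf{What your argument does prove.} For $d\ge 8$ your comparison does hold, since $G-1=\frac{4d(d-1)}{(d-2+s)((d-2)^2+ds)}$ with $s=\sqrt{d^2-8d+8}$. For example, at $d=8$ this gives $0.433<0.485$, and asymptotically it is $1/d$ against $2/d$. As it stands, your argument proves the lemma only for $d\ge 8$.

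\textbf{Comparison with the paper's route.} The paper bounds the branching random walk started at a neighbour $v$ via Liggett's supermartingale $\sum_x\xi_t(x)\theta^{\ell_v(x)}$ with $\theta=(d-1)^{-1/2}$. This gives $\e_v(\text{time at }\circ)\le \frac{2\lambda}{1-2\lambda\sqrt{d-1}}$, which it multiplies by the expected number $\frac{\lambda d}{1+\lambda}$ of neighbours infected by $\circ$.

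However, substituting $\lambda=\frac1{d-2}$ into $1+\frac{\lambda d}{1+\lambda}\cdot\frac{2\lambda}{1-2\lambda\sqrt{d-1}}$ actually gives $1+\frac{2d}{(d-1)(d-2-2\sqrt{d-1})}$. The denominator has $-2$, not $+2$, which is about $24$ at $d=7$ and about $4.2$ at $d=8$. So the paper's written argument does not reach the stated constant either, and your exact Green-function value is considerably sharper than it. You cannot fill the $d=7$ case by borrowing the paper's steps. That case of the statement needs an argument that neither your proposal nor the paper supplies.
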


\begin{proof}
We upper bound the contact process by the branching random walk, in which every particle dies at rate $1$, and gives birth to a child particle at each neighbouring site at rate $\lambda$ (and thus gives birth at total rate $\lambda d$). This branching random walk does not survive locally when $\lambda\le \frac{1}{2\sqrt{d-1}}$, which is the case for all $\lambda\le\frac 1 {d-2}$ when $d\ge 7$. 
We apply the supermartingale of \cite[Part I Equation 4.6]{MR1717346} with weight $\theta=\frac{1}{\sqrt{d-1}}$ to find
\[
\e_v\left(
\int_0^{+\infty}
\sum_{x \in \T} \xi_t(x) \theta^{\ell_v(x)}
{\rm d}t
\right)\leq \frac{1}{1-2\lambda\sqrt{d-1}}
\]
where $\ell_v(v)=0$ and every vertex has $d-1$ neighbours with $\ell_v$ one larger and $1$ neighbour with it one smaller. Now our root $\circ$ expects to infect
\[
\frac{\lambda d}{1+\lambda}
\]
such neighbours $v$, and each of them gives $\circ$ weight $\theta^{-1}$ in its supermartingale. This produces
\[
\e_v(\text{infectious period of }\circ)
\leq
\theta\left(
\frac{1}{1-2\lambda\sqrt{d-1}}
-1\right)
=
\frac{2\lambda}{1-2\lambda\sqrt{d-1}}
\]
and so by summation
\[
\begin{split}
\e_\circ(\text{infectious period of }\circ)
&\leq
1+
\frac{\lambda d}{1+\lambda}
\cdot
\frac{2\lambda}{1-2\lambda\sqrt{d-1}}\\
&\leq
1+
\frac{2 d}{(d-1) \left(d-2 \sqrt{d-1}+2\right)}
\end{split}
\]
in the final line inserting again the worst case parameter $\lambda = \frac{1}{d-2}$. 
\end{proof}

%

In the following lemma we compare survival probabilities on the $(d-1)$-ary and  $d$-regular trees. The natural way to lower bound survival on the $(d-1)$-ary tree is by the SIR lower bound, but note that when $\lambda\leq \frac{1}{d-2}$ that SIR process has survival probability $0$.

\begin{lemma}\label{lem_dary}
On the $(d-1)$-ary tree with initially one infected vertex at the root (i.e. the tree in which every vertex has degree $d$ except the root which has degree $d-1$) when $\lambda\leq \frac{1}{d-2}$ and $d\geq 7$ we find survival probability at least
\[
\left(
1-\frac{1}{d}+O\left(
\frac{1}{d^2}
\right)
\right)\rho(\{v\}).
\]
\end{lemma}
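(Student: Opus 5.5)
Let $\mathbb{T}'$ be the $(d-1)$-ary tree rooted at $v$, viewed inside $\T$ by deleting one neighbour $u$ of $v$ together with the branch behind $u$. Let $\rho'$ be the survival probability from $\{v\}$ on $\mathbb{T}'$. I would compare $\rho'$ with $\rho(\{v\})$ on $\T$ by controlling how much survival in $\T$ relies on the edge $\{v,u\}$.

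Use the graphical representation on $\T$ and couple both processes with the same Poisson clocks. Infection paths in $\T$ from $(v,0)$ that never use the arrows $v\to u$ are exactly infection paths in $\mathbb{T}'$. Therefore
\[
\rho(\{v\})-\rho' \le \p\big(\text{survival in }\T\text{ and every surviving infection path uses an arrow }v\to u\big).
\]
On this event, some arrow $v\to u$ must fire while $v$ is infected, and the infection launched into $u$'s branch must survive. By the first-moment (Mecke) formula, the right-hand side is at most
\[
\lambda\,\e_v(\text{infectious period of }v)\cdot \rho^{(u)},
\]
where $\rho^{(u)}$ is the probability that an infection started at $u$ survives. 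To keep the $(1-O(1/d))$ structure, I would not bound $\rho^{(u)}$ by $1$. Instead I would bound it by $\rho(\{u\})=\rho(\{v\})$, using monotonicity and symmetry. This upper bound is legitimate, because the launched infection lives on a subgraph of $\T$, with the return to $v$ handled by domination.

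For the infectious period I would invoke Lemma~\ref{lem_root_occupation}, which gives $\e_v(\text{infectious period})\le 1+O(1/d)$ when $\lambda\le 1/(d-2)$ and $d\ge 7$. Since $\lambda\le 1/(d-2)=1/d+O(1/d^2)$, this yields
\[
\rho'\ge \rho(\{v\})\big(1-\lambda(1+O(1/d))\big)=\Big(1-\frac1d+O\big(\tfrac1{d^2}\big)\Big)\rho(\{v\}).
\]

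The main obstacle is making the first-moment bound rigorous. The event ``some arrow $v\to u$ fires while $v$ is infected and the infection from $(u,s)$ survives'' involves the future of the same graphical construction, so it is not independent of the past. The remedy I would try first is the strong Markov property applied at each such arrow time, together with attractiveness. The infection configuration at that time, restricted to what is newly created from $u$, is dominated by a fresh process from $\{u\}$ whose survival probability is $\rho(\{v\})$. The counting formula for Poisson arrows on $\{t: v\in\xi_t\}$ then gives the factor $\lambda\,\e(\text{infectious period})$. A subtler point is that ``uses the edge'' should be defined through the first such arrow, to avoid overcounting. Overcounting, however, only weakens the bound in the safe direction.
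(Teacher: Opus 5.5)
Your proof is correct, and it reaches the same inequality as the paper, $\rho'\ge\bigl(1-\lambda\,\e_v(\text{infectious period of }v)\bigr)\rho(\{v\})$, from the same input, Lemma~\ref{lem_root_occupation}. What differs is the vehicle.

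The paper uses the CP-BRW of \cite{MR4771965} with sinks at the two endpoints of the edge. Each crossing then spawns a genuinely independent particle that runs a contact process on a $(d-1)$-ary half-tree. Let $\mu$ be the expected number of CP-BRW particles having a path to $\infty$. Linearity gives $\mu\le\sigma+\lambda\,\e(\text{period})\,\mu$ and $\rho(\{v\})\le\mu$, and one rearranges. The rearrangement tacitly needs $\mu<\infty$, which holds because each particle has on average at most $\lambda\,\e(\text{period})<1$ children.

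You instead stay inside one graphical representation, use the monotone coupling $\mathbb{T}'\subset\T$, and bound $\rho(\{v\})-\rho'$ by a first-moment count of the arrows $v\to u$. Your route is self-contained: it needs no auxiliary process and no finiteness of $\mu$, since the factor $\rho(\{v\})\le 1$ enters directly. The paper's route builds the independence of restarted infections into the construction, so nothing has to be said about how a surviving path decomposes at a crossing.

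The step you flag as the main obstacle is actually immediate, with no strong Markov or attractiveness argument needed. Suppose the infection survives in $\T$ but dies in $\mathbb{T}'$ at time $\tau$.
\begin{itemize}
\item Every path reaching beyond $\tau$ first leaves $\mathbb{T}'$ through an arrow $v\to u$ at some time $s\le\tau$.
\item There are a.s.\ finitely many such arrows, so for one of them $(v,0)\rightsquigarrow(v,s)$ and the infection from $(u,s)$ survives in all of $\T$.
\item These two events are determined by the marks in $[0,s]$ and in $(s,\infty)$ respectively, so they are independent. An extra arrow $v\to u$ at time $s$ affects neither.
\item The Mecke formula for the rate-$\lambda$ arrow process therefore gives exactly $\lambda\int_0^\infty\p(v\in\xi^{\{v\}}_s)\,ds\cdot\rho(\{v\})$.
\end{itemize}
Survival from $(u,s)$ must be taken in all of $\T$, because a surviving path may cross back to $v$'s side. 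Its probability is exactly $\rho(\{v\})$ by time-homogeneity and transitivity, so your subgraph and domination remark is unnecessary.

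Finally, using Lemma~\ref{lem_root_occupation} only in the form $1+O(1/d)$ is the safe choice. Inserting $\lambda=\frac1{d-2}$ into its intermediate bound $\frac{\lambda d}{1+\lambda}\cdot\frac{2\lambda}{1-2\lambda\sqrt{d-1}}$ gives $\frac{2d}{(d-1)(d-2-2\sqrt{d-1})}$, not the displayed $\frac{2d}{(d-1)(d-2\sqrt{d-1}+2)}$. This is still $O(1/d)$ for $d\ge7$, so neither the statement nor your argument is affected.
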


\begin{proof}
Consider the CP-BRW of \cite{MR4771965} in $\T$ with two sinks at two adjacent vertices $v$ and $w$. Immediately we can write
\[
\begin{split}
\rho(\{v\}) &\leq \p_v(\text{path to $\infty$ exists in the CP-BRW})\\
&\leq \e_v(\text{CP-BRW particles which have some path to $\infty$})=:\mu.
\end{split}
\]

We can then solve for this mean, writing $\sigma$ for the survival probability on the $(d-1)$-ary tree and recalling Lemma \ref{lem_root_occupation}
\[
\mu\leq \sigma +
\lambda\left(
1+
\frac{2 d}{(d-1) \left(d-2 \sqrt{d-1}+2\right)}
\right)
\mu.
\]

Rearranging and setting $\lambda= \frac{1}{d-2}$, we find
\begin{equation}\label{eq_prod1}
\sigma \geq \left(
1-\frac{1}{d-2}\left(
1+
\frac{2 d}{(d-1) \left(d-2 \sqrt{d-1}+2\right)}
\right)
\right)
\rho(\{v\})
\end{equation}
which is $(1-d^{-1}+O(d^{-2}))\rho(\{v\})$ as claimed.
\end{proof}

In \cite{MR1717346} $\beta$ is the geometric base of the hitting probability for both the branching random walk and the contact process, in different sections. Here we write $\tilde{\beta}$ to emphasise that this is the branching random walk version for which an explicit expression is available: we will need the contact process version in Section \ref{sec:lower_bound} for which we reserve $\beta$.

\begin{proposition}[{\cite[Part I Equation 4.17]{MR1717346}}]\label{prop_liggett_hitting}
The probability that the branching random walk from $\circ$ on $\T$ ever occupies a vertex $v$ at distance $\de(\circ,v)=k\geq 1$ is bounded by
\[
\tilde{\beta}(\lambda)^k
\quad\text{where}\quad\tilde{\beta}(\lambda):=\frac{1-\sqrt{1-4(d-1)\lambda^2}}{2(d-1)\lambda},
\] 
when $\lambda<\frac{1}{2\sqrt{d-1}}$.
\end{proposition}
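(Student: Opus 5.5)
The statement is cited from Liggett, so the plan is to reproduce the standard generating-function argument for the branching random walk.

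\emph{Reduction to a one-dimensional problem.} Let $u(x)$ be the probability that the branching random walk started from one particle at $x$ ever places a particle at $v$. By the tree structure, any particle reaching $v$ must first pass through every vertex on the geodesic from $x$ to $v$. Every particle founds an independent copy of the process. Hence, by the strong Markov property applied at the first time the path-vertex $y$ adjacent to $x$ is reached, and then iterated, the hitting probability factorises:
\[
u(x)\le \tilde\beta^{\,\de(x,v)}, \qquad \tilde\beta:=\p(\text{the BRW from a vertex ever occupies a given neighbour}).
\]
Indeed, reaching $v$ from $x$ requires first reaching $y$. From the first particle at $y$, reaching $v$ has probability at most $u(y)$ by monotonicity. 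This is an inequality rather than an equality only because other particles may also exist by then.

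\emph{Identifying $\tilde\beta$.} Fix a neighbour $y$ of the root and let $q$ be the probability of ever occupying $y$ starting from one particle at $\circ$. Consider the first event at the root particle. It dies with probability $\frac{1}{1+\lambda d}$. It gives birth to $y$ with probability $\frac{\lambda}{1+\lambda d}$, which is a hit. It gives birth to one of the $d-1$ other neighbours with total probability $\frac{\lambda(d-1)}{1+\lambda d}$. In that last case we now have two independent particles, the parent at $\circ$ and the child at distance $2$ from $y$. The child reaches $y$ with probability at most $q^2$ by the factorisation above. So the failure probability $1-q$ satisfies
\[
1-q\ \ge\ \frac{1}{1+\lambda d}+\frac{\lambda(d-1)}{1+\lambda d}(1-q)(1-q^2)\quad\text{(in fact equality)}.
\]
With the exact equality this becomes the cubic
\[
(1-q)\bigl[(d-1)\lambda q^2-q+\lambda\bigr]=0 .
\]
The relevant root is the smallest one,
\[
q=\frac{1-\sqrt{1-4(d-1)\lambda^2}}{2(d-1)\lambda},
\]
which is real and less than $1$ precisely when $\lambda<\frac{1}{2\sqrt{d-1}}$.

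\emph{Justifying the root choice.} This is the main obstacle: showing that the hitting probability is the minimal solution rather than the root $1$ or the larger quadratic root. I would truncate the process, defining $q_n$ as the probability of hitting $y$ within $n$ generations of births. Then $q_0=0$, and $q_{n+1}=F(q_n)$ for the increasing map $F$ given by the recursion above. Here the exact equality is obtained from the generation-counted version of the factorisation, with each step along the path costing its own independent hitting event. Since $q_n\uparrow q$, and iterating an increasing map from $0$ converges to the least fixed point in $[0,1]$, $q$ equals the smaller root $\tilde\beta(\lambda)$.

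\emph{Conclusion.} Combining the two steps gives the claimed bound $\tilde\beta(\lambda)^k$ for distance $k$. Alternatively, the same value can be read off from Lemma~\ref{lem_root_occupation}'s supermartingale: the weighted sum with weight $\theta=\tilde\beta$ is a nonnegative martingale because $\lambda(d-1)\theta^2-\theta+\lambda=0$. Optional stopping at the first hit of $v$ then gives $\theta^{-k}\p(\text{hit})\le 1$, i.e. $\p(\text{hit})\le\tilde\beta^k$. I would present this martingale route as the main proof, since it avoids the fixed-point subtleties, but it needs care with the stopping argument because the martingale is not bounded.
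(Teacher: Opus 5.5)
\textbf{The generating-function route fails, and not only at the root choice.} Your conclusion $\p(\text{hit } v)\le\tilde\beta(\lambda)^k$ is true, but the first three steps do not prove it. The factorisation $u(x)\le q^{\de(x,v)}$, with $q$ the one-step hitting probability, already fails at distance $2$.

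To see this, take $\de(x,v)=2$ with middle vertex $y$. Freeze each lineage at its first visit to $y$, and let $N$ be the number of frozen particles. Every particle that ever reaches $v$ descends from one of them. Their descendants are independent copies of the process started at $y$. Hence
\[
u(x)=\e\bigl[1-(1-u(y))^{N}\bigr]\ \ge\ u(y)\,\p(N\ge 1)=q^2 .
\]
The inequality is strict because $\p(N\ge 2)>0$: the first particle may give birth at $y$ twice before dying.

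So the ``other particles'' you mention make the hitting probability larger, not smaller. Your monotonicity remark only controls the descendants of the first arrival. Consequences:
\begin{itemize}
\item Your recursion is a strict inequality in the opposite direction, not an equality.
\item $q\neq\tilde\beta(\lambda)$. In fact $q=\p(N\ge 1)<\e N\le\tilde\beta(\lambda)$.
\item The cubic is also miscomputed. With equality, your recursion rearranges to $(1-q)\bigl(1+\lambda+\lambda(d-1)q^2\bigr)=1$. Neither $q=1$ (the root particle may die first) nor $\tilde\beta(\lambda)$ satisfies this.
\end{itemize}

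\textbf{The missing idea is that $\tilde\beta(\lambda)$ is a first moment, not a probability.} Let $m=\e N$ be the mean number of first-passage lineages to a neighbour. With $m$ in place of $q$, your argument goes through:
\begin{itemize}
\item Iterating the freezing gives mean $m^k$ first-passage lineages at distance $k$.
\item A first-step analysis (mean lifetime $1$) on generation-truncated means gives $m_{n+1}\le\lambda+\lambda(d-1)m_n^2$.
\item Your iteration from $0$ then yields $m\le\tilde\beta(\lambda)$.
\item Markov's inequality gives $\p(\text{hit } v)\le\tilde\beta(\lambda)^k$.
\end{itemize}

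\textbf{Your closing martingale argument is correct and is the cleaner proof.} The paper itself gives no proof and only cites Liggett; the weighted sums are those already used in Lemma~\ref{lem_root_occupation}. To make it precise:
\begin{itemize}
\item Take $\ell$ horocyclic with $\ell(\circ)=0$ and the reference end beyond $v$, so that $\ell(v)=-k$.
\item With $\eta_t(x)$ the number of particles at $x$, the sum $M_t=\sum_x\eta_t(x)\theta^{\ell(x)}$ has zero drift, since $\lambda\bigl((d-1)\theta+\theta^{-1}\bigr)=1$.
\item Alternatively, the weight $\theta^{\de(x,v)-k}$ gives a supermartingale, because the drift at $v$ is $\lambda d\theta-1<0$.
\item No boundedness is needed. $M$ is a nonnegative supermartingale and $M_\tau\ge\theta^{-k}$ at the hitting time $\tau$. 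So $\theta^{-k}\p(\tau\le t)\le\e[M_{\tau\wedge t}]\le 1$ for every $t$; then let $t\to\infty$.
\end{itemize}
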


\begin{proposition}\label{prop_isolated}
When $\lambda\leq\frac{1}{d-2}$ and $d\geq 7$ we find
$\tp(D_\circ=0)\geq {1}/{181}$.
\end{proposition}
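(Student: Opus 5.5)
The plan is to bound $\p(\circ\in X_\lambda,\,D_\circ=0)$ from below by $\rho(\{\circ\})/181$ via an explicit scenario in the graphical representation. In this scenario the infection from $\circ$ survives, while the infection started from each neighbour $u\sim\circ$ at time $0$ dies out. The scenario must not cost a factor exponentially small in $d$. So I will not ask every neighbour to recover before sending any arrow, which would cost roughly $(1+\lambda d)^{-d}$. Instead I will ask each neighbour to satisfy a decreasing event with probability close to $1-O(1/d)$. The product over the $d$ neighbours is then bounded below by a constant like $e^{-c}$, using Harris/FKG for decreasing events.

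\textbf{Scenario.} Fix a neighbour $v\sim\circ$, and write $T_v$ for the $(d-1)$-ary subtree hanging from $v$ away from $\circ$. The scenario has three parts.
\begin{enumerate}[label=(\roman*)]
	\item \emph{Killing $v$.} Before sending any arrow, $v$ recovers at some time $r$, which has probability $1/(1+\lambda d)\ge 1/3$ since $\lambda d\le d/(d-2)\le 7/5$. During $[0,r)$, $\circ$ does not recover. After $r$, $\circ$ sends an arrow to $v$ at a time $s$ before $\circ$ recovers. This has probability bounded below by an explicit constant, of order $\lambda/(1+\lambda)\cdot$ (no other marks), which I will compute.
	\item \emph{Survival from $v$.} After time $s$, the infection at $v$ survives using only the graphical construction of $T_v\cup\{v\}$ on $(s,\infty)$. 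By the strong Markov property and Lemma~\ref{lem_dary}, this has probability at least $(1-\tfrac1d+O(d^{-2}))\rho(\{\circ\})$; for $d\ge7$ I will use the explicit form \eqref{eq_prod1} rather than the $O(\cdot)$.
	\item \emph{Killing the other neighbours.} For each other neighbour $u\neq v$, let $F_u$ be the event that the infection from $u$ never crosses the edge $u\to\circ$ and does not survive within its own branch. Each $F_u$ is decreasing. By Proposition~\ref{prop_liggett_hitting} with $k=1$, the crossing probability is at most $\tilde\beta(\lambda)$. At $\lambda\le\frac1{d-2}$ this is $\frac1{d-1}(1+O(1/d))$.
\end{enumerate}

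\textbf{Checking the scenario gives $D_\circ=0$.} Under (i), $v\notin X_\lambda$, since $v$ dies before any arrow. Under $F_u$, $u\notin X_\lambda$: its infection can only spread inside its own branch, and it dies there. Under (ii), $\circ\in X_\lambda$ through $v$.

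\textbf{Independence.} The events in (i) and (ii) live on disjoint parts of the graphical construction, namely the marks at $\circ,v$ on $[0,s]$ and the region $T_v\times(s,\infty)$. They are therefore independent given $s$. The events $F_u$ live on the branches of the $u$'s together with the arrows $u\to\circ$, which are disjoint from the previous ones except for the arrows $u\to\circ$; I will restrict (i) to not involve those arrows. Among the $F_u$ themselves, Harris gives $\p(\bigcap_u F_u)\ge\prod_u\p(F_u)$. Multiplying the pieces and dividing by $\rho(\{\circ\})$ yields a bound of the form
\[
\tp(D_\circ=0)\ge c_1\,\Bigl(1-\tfrac1d-c_2d^{-2}\Bigr)\Bigl(1-\tilde\beta-\rho_{\rm br}\Bigr)^{d-1}.
\]
Here $c_1$ comes from (i), $c_2$ is the explicit correction in \eqref{eq_prod1}, and $\rho_{\rm br}$ is the survival probability of a neighbour's infection inside its own branch. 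I then evaluate this bound numerically at the worst case $d=7$, $\lambda=\frac1{d-2}$, and check monotonicity in $d$ to get $\ge1/181$.

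\textbf{Main obstacle.} The hard step is bounding $\p(F_u)$. The hitting term $\tilde\beta\approx 1/(d-1)$ is harmless. But I also need the survival probability $\rho_{\rm br}$ of a neighbour's infection inside its own branch to be $O(1/d)$ for $\lambda\le\frac1{d-2}$. Otherwise the $(d-1)$-th power kills the constant.

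My first attempt would be to avoid needing a small $\rho_{\rm br}$ altogether. I would condition instead on $u\notin X_\lambda$, which has probability $1-\rho$, and bound $\rho\le\rho(\{\circ\})$ itself through the CP-BRW first-moment bound $\mu$ from the proof of Lemma~\ref{lem_dary}. Combined with Lemma~\ref{lem_root_occupation}, that bound gives $\rho\lesssim\lambda\cdot(\text{expected infectious period})\cdot\mu$-type bounds of order $1/d$. If this is too weak, the fallback is to require only that $u$'s infection never hits $\circ$. In that case $u\in X_\lambda$ through its own branch is still excluded from $D_\circ=0$, so I would add a decreasing condition on that branch and use Harris with the total-progeny bound $\e(\text{infectious period})$ to control it.
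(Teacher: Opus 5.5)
Your scenario is a valid event inclusion: it does force $\circ\in X_\lambda$ and $D_\circ=0$, and the independence bookkeeping via disjoint marks and the strong Markov property is fine. However, the bound it produces cannot reach $1/181$, for two separate reasons.

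\textbf{The step (i) factor is $O(1/d)$ and you never sum over $v$.} The factor from step (i) is not a constant. It equals $\frac{1}{1+\lambda d}\cdot\frac{1+\lambda d}{2+\lambda d}\cdot\frac{\lambda}{1+\lambda}=\frac{\lambda}{(2+\lambda d)(1+\lambda)}$, which is about $\frac{1}{3d}$ when $\lambda\approx\frac1d$. Since you fix one neighbour $v$ and never sum over it, your final bound is at most this factor, hence $O(1/d)$. It tends to $0$, so $d=7$ is not the worst case and no uniform constant comes out.

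Nor can you simply multiply by $d$, because your scenarios for two neighbours $v_1,v_2$ are compatible. Both may recover before sending any arrow, be reinfected later from $\circ$, and each survive in its own branch, while $F_{v_1},F_{v_2}$ then hold automatically.

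The paper recovers the factor $d$ by making the scenarios disjoint. The recovery of $v$ must precede every mark at $v$ \emph{and at $\circ$}, and then the first mark at $\circ$ must be an arrow to $v$. This costs $\frac{1}{2+2\lambda d}\cdot\frac{\lambda}{1+\lambda d}$ for each $v$, but summing over the $d$ disjoint choices gives a quantity of order $\frac18$.

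\textbf{The union bound on $\p(F_u)$ is too lossy at $d=7$.} Even with the factor $d$ restored, the bound $\p(F_u)\ge 1-\tilde\beta(\lambda)-\rho_{\mathrm{br}}$ fails numerically. At $d=7$, $\lambda=\frac15$ sits just below $\frac{1}{2\sqrt6}$, so $\tilde\beta(\lambda)=\frac13$ rather than roughly $\frac16$. The available bound on survival is $\rho_{\mathrm{br}}\le 1-\frac{1}{\lambda d}=\frac27$.

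That survival bound is immediate from domination by the branching random walk, so the point you flag as the main obstacle is not really one. Your proposed route through $\mu$ would not help anyway: the inequality $\mu\le\sigma+\lambda(\cdots)\mu$ only compares $\mu$ with $\sigma$ and cannot by itself make $\rho$ small.

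With these numbers, $\p(F_u)\ge 0.381$, so $\p(F_u)^6\approx 3\times10^{-3}$. Even $d$ times your product is then about $7\cdot 0.049\cdot 0.686\cdot 0.003\approx 7\times10^{-4}<\frac1{181}$. The union bound discards the strong positive correlation between hitting $\circ$ and surviving.

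The missing idea is to treat both together through the branching random walk conditioned on extinction. This is again a branching random walk with birth and death rates swapped, i.e.\ after a time change one with infection rate $\frac{1}{\lambda d^2}$. Proposition~\ref{prop_liggett_hitting} then gives $\p(F_u)\ge\frac{1}{\lambda d}\bigl(1-\tilde\beta(\frac{1}{\lambda d^2})\bigr)\approx 0.636$ at $d=7$, whose sixth power is about $0.066$.

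Even so, the paper's product at $d=7$, $\lambda=\frac15$ exceeds $\frac1{181}$ only barely. There is therefore no slack for either of the two losses in your scheme.
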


\begin{proof}
We design a scenario which ensures that $\circ$ is an isolated vertex in $X_\lambda$. For a neighbour $v\sim \circ$, we introduce the following events:
\begin{itemize}
	\item $A_1(v)$: $v$ recovers before infecting any neighbour, and before the infection started in $\circ$ recovers or infects any neighbour.
	\item $A_2(v)$: $A_1(v)$ occurs, and then the infection started in $\circ$ infects $v$, before recovering or infecting any other neighbour.
	\item $A_3(v)$: $A_2(v)$ occurs, and then the infection that has reached $v$, survives in the $d$-ary tree with root $v$ obtained by removing $\circ$.
	\item $A_4(v)$: the infections started from all other neighbours $w\ne v$ of $\circ$, all die out and also don't infect the root.
\end{itemize}
	It should be clear that on the event $A_3(v)\cap A_4(v)$ we have that $\circ$ is an isolated vertex in $X_\lambda$ and thus $D_\circ=0$. Moreover, $A_3(v)$ is independent from $A_4(v)$, and the events $A_3(w)\cap A_4(w)$ are disjoint when we take all neighbours $w\sim \circ$. Therefore,
	\[
	\p(D_\circ=0)\ge \sum_{w\sim \circ} \p(A_3(w)\cap A_4(w))\ge d \p(A_3(v)) \p(A_4(v)).
	\]
	Moreover, we have
	\[
	\p(A_2(v))=\p(A_1(v))\p(A_2(v)|A_1(v))=\frac{1}{2+2\lambda d}\cdot \frac{\lambda}{1+\lambda d}
	\]
	and if we write $\p(A_3(v)| A_2(v))= \sigma $ as in \ref{lem_dary}, we have
	\[\tp(D_\circ=0)\ge d\cdot \frac{1}{2+2\lambda d}\cdot \frac{\lambda}{1+\lambda d}\cdot \frac{\sigma}{\rho(\{\circ\})}\cdot\p(A_4(v)), \] 
	and it remains to control $\p(A_4(v))$.
	
 For $w\sim\circ$, $w\neq v$, let $B_w$ be the event that the contact process started from $\{w\}$ becomes extinct without ever reaching $\circ$. Thus
\[
A_4(v)=\bigcap_{\substack{w\sim\circ\\w\neq v}}B_w.
\]
Before reaching $\circ$, the process started from $w$ uses only the graphical marks in the component of $\T\setminus\{\circ\}$ containing $w$, together with the arrows from $w$ to $\circ$. These collections of marks are disjoint for different $w$, and disjoint from the marks determining $A_3(v)$, so all of those events are mutually independent. 

Fix one such $w$ and couple its contact process with the dominating branching random walk. If this branching random walk becomes extinct without visiting $\circ$, then $B_w$ occurs. The branching random walk is supercritical when $\lambda d>1$, but it is not difficult to see that the branching random walk conditioned on extinction is just another branching random walk, in which each particle:
\begin{itemize}
	\item dies at rate $\lambda d$
	\item gives birth to a child particle at each neighbouring site at rate $1/d$ (and thus gives birth at total rate $1$).
\end{itemize}
Note that the total rate at which a particle dies or gives birth is $1+\lambda d$ in both processes, however the rates $1$ and $\lambda d$ of the unconditioned process are reversed to $\lambda d$ and $1$ for the process conditioned on extinction. 
%
%
After rescaling time so that the death rate is one, we recover the original unconditioned branching random walk, but with infection rate
$\tfrac{1}{\lambda d^2}$ instead of $\lambda$.
 Moreover,
\[
\frac{1}{\lambda d^2}<\frac1d<\frac{1}{2\sqrt{d-1}},
\]
so Proposition~\ref{prop_liggett_hitting}, applied under the extinction-conditioned law to the adjacent vertices $w$ and $\circ$, yields
\[
\p(B_w)
\geq \frac{1}{\lambda d}
\left(1-\tilde{\beta}\left(\frac{1}{\lambda d^2}\right)\right).
\]
Taking the product over the $d-1$ independent branches gives
\begin{equation}\label{eq_prod3}
\p(A_4(v))
\geq
\left(
\frac{1-\tilde{\beta}(\tfrac{1}{\lambda d^2})}{\lambda d}
\right)^{d-1}.
\end{equation}

Finally, \eqref{eq_prod1} gives $\sigma/\rho(\{\circ\})\geq A_d$, in the notation of Appendix~\ref{app:isolation_product}. We combine this with \eqref{eq_prod3} and apply Lemma~\ref{lem:isolation_product_bound} to obtain
\[
\tp(D_\circ=0)
\geq A_dG_d(\lambda)H_d(\lambda)
>\frac{1}{181}.
\]
\end{proof}


\section{Lower bound on $\lambda_p$}
\label{sec:lower_bound}

In this section, we consider some fixed $d\ge 3$, and we prove $\lambda_p>\lambda_1$. We first introduce some notation. For two vertices $u$ and $v$ in the regular tree, we write $u\rightsquigarrow v$ for the event that there exists and infection path from $u$ to $v$ in the graphical construction of the contact process (started at time 0), or in other words that the infection started with only $u$ reaches $v$ (namely will infect $v$ at some positive time). If $A$ is a set of vertices, we also write $u\rightsquigarrow A$ for the event that there is an infection path from $u$ to some $v$ in $A$.

We also write $(e_n)_{n\ge 0}$ for a distinguished path in the tree, so $e_n$ and $e_0$ are at graph distance $n$ from each other. Writing
\[
\beta(\lambda)=\lim_{n\to +\infty} \left(\p(e_0\rightsquigarrow e_n)\right)^{\frac 1 n},
\]
we will use repeatedly that at $\lambda=\lambda_1$ we have
\begin{equation}\label{ineq:hitting_beta_bound}
	\p\left(e_0\rightsquigarrow e_k)\right) \le \beta(\lambda_1)^k = \frac 1 {(d-1)^k}
\end{equation}
by~\cite{MR1717346}, (4.49) and Corollary 4.87.

Finally, we introduce 
\begin{equation}
	E_k= \bigcap_{j=1}^k \{e_j\rightsquigarrow \{e_0, e_{k+1}\}\}
\end{equation}
the event that there exist infection paths from each vertex in $\{e_1,\ldots,e_k\}$ to either $e_0$ or $e_{k+1}$.

\begin{proposition} \label{prop_path_escape}
	There exists $k\ge 1$ such that at $\lambda=\lambda_1$, we have 
	\[
	\p(E_k)<\frac 1 {(d-1)^k}.
	\]
\end{proposition}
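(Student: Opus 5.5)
We want $\p(E_k)<(d-1)^{-k}$ at $\lambda=\lambda_1$ for some $k$. The plan is to bound $\p(E_k)$ by a union bound over ways the segment can escape, sharpened by one local gain. For each $j\in\{1,\dots,k\}$ the event $e_j\rightsquigarrow\{e_0,e_{k+1}\}$ forces an infection path from $e_j$ to leave the segment through an endpoint. A path starting at $e_j$ and leaving through $e_0$ must cross all of $e_{j-1},\dots,e_1$. So the paths from $e_1,\dots,e_k$ to the two endpoints can be organised by a splitting index $m\in\{0,\dots,k\}$: the vertices $e_1,\dots,e_m$ exit through $e_0$ and $e_{m+1},\dots,e_k$ exit through $e_{k+1}$.

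The crude bound is then
\[
\p(E_k)\le\sum_{m=0}^k \p(e_m\rightsquigarrow e_0)\,\p(e_{m+1}\rightsquigarrow e_{k+1}),
\]
using the extreme vertices $e_m$ and $e_{m+1}$. Via \eqref{ineq:hitting_beta_bound} this gives about $(k+1)(d-1)^{-(k-1)}$, which is far too weak. The extreme vertex reaching the boundary does not force every intermediate vertex to do so, and the true constraint is on all $k$ vertices simultaneously.

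The approach I would actually follow is therefore a per-vertex factorisation. Explore from $e_1$ outward: $E_k$ requires each $e_j$ to have its own path. I would show that, conditionally on the event for $e_1,\dots,e_{j-1}$, the probability that $e_j$ also connects costs a factor at most $(d-1)^{-1}$ on average. This should come from a supermultiplicativity or BK-type argument for disjoint-ish infection paths, combined with the exact asymptotics $\p(e_0\rightsquigarrow e_n)\approx\beta^n$ at $\lambda_1$ from \cite{MR1717346}.

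To obtain a strict inequality I would identify a positive-probability local configuration, for example $e_1$ recovering immediately before sending any arrow. On this configuration the path from $e_1$ must use a fresh later infection, which costs a multiplicative factor $c<1$ relative to the generic bound. Alternatively, I would use the fact that the path from $e_j$ has to travel distance $\min(j,k+1-j)$ while \eqref{ineq:hitting_beta_bound} charges only through the product of independent segments. The key comparison would be a bound of the form $\p(E_k)\le C\,(d-1)^{-k}\,\gamma^{k}$ with $\gamma<1$ coming from the local gain, or $\p(E_k)\le C'(d-1)^{-k}$ with $C'$ independent of $k$ combined with an additional strict loss factor $c<1$ that compounds over about $k/L$ disjoint blocks. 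Taking $k$ large then makes the product smaller than $(d-1)^{-k}$.

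The main obstacle is the decoupling step. The infection paths for different $e_j$ share graphical marks, so the $k$ events are positively correlated, and a naive product bound fails. I would handle this by conditioning on the rightmost vertex exiting left, call it $e_m$, and the leftmost exiting right, $e_{m+1}$. One path from $e_m$ to $e_0$ passes through $e_{m-1},\dots,e_1$ but need not serve them, so I would use the fact that a path from $e_m$ passing $e_i$ at time $s$ means $e_i$ is infected at time $s$. For the backward dual this gives a path from $e_i$ at time $0$ only if $e_i$'s own survival path links up. I expect to need a sub-multiplicativity in the distance for $\p(e_0\rightsquigarrow e_n)$ plus a strict-inequality gain extracted from such local events, and making this uniform in $k$ is where the real difficulty lies.
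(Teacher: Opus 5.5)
Your proposal does not prove the proposition. The decisive step is left open, as you say yourself, and the route you choose in its place cannot work as stated.

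\emph{The crude bound is miscounted and wrongly discarded.} Since $\de(e_m,e_0)+\de(e_{m+1},e_{k+1})=k$, the union bound gives $(k+1)(d-1)^{-k}$, not $(k+1)(d-1)^{-(k-1)}$. It is therefore off only by the factor $k+1$. The product also needs justifying. Choose $m$ maximal with $e_m\rightsquigarrow e_0$. Any path from $e_{m+1}$ meeting a path from $e_m$ to $e_0$ would give $e_{m+1}\rightsquigarrow e_0$, so the two events occur disjointly and BK applies.

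\emph{The per-vertex factorisation is false.} Already for $j=1$ and $d\ge4$ you have $\p(E_1)\ge \frac{2\lambda_1}{1+2\lambda_1}\ge\frac{2}{d+1}>\frac1{d-1}$. This uses only that $e_1$ may send an arrow to $e_0$ or $e_2$ before recovering, together with $\lambda_1\ge\frac1{d-1}$. Read as a geometric average, the claim is just the non-strict form of the proposition itself. BK also cannot factorise $\bigcap_j\{e_j\rightsquigarrow\{e_0,e_{k+1}\}\}$, since those events are not witnessed disjointly.

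\emph{The local gain is misread.} If $e_1$ recovers before sending any arrow, the process from $\{e_1\}$ is extinct and $E_k$ simply fails; there is no ``fresh later infection''. A single such vertex yields only a $k$-independent constant, which cannot beat $k+1$. You also have no bound with $C'$ independent of $k$ to combine it with.

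The missing idea is to keep the crude bound and extract a conditional gain of order $o(1/k)$. It comes from the observation you make but never use: every intermediate vertex must fire its own arrow before recovering.
\begin{itemize}
\item Condition on $E_{k,m}=\{e_m\rightsquigarrow e_0\}\circ\{e_{m+1}\rightsquigarrow e_{k+1}\}$.
\item Let $T<T_2$ be the first two recovery marks or outgoing arrows in the exploration from $\{e_m,e_{m+1}\}$. Their holding rates are at most $2+2\lambda d$ and $3+3\lambda d$.
\item Before $T_2$ this exploration has reached at most one further vertex. So for the $k-4$ vertices $e_i$ with $i\notin\{m-1,\dots,m+2\}$, the marks before $T_2$ can be taken independent of $E_{k,m}$.
\item $E_k$ fails as soon as one of these vertices has $R_i<\min(I_i,T_2)$.
\item The number $N_k$ of them with $R_i<I_i$ is $\mathrm{Bin}(k-4,\frac1{1+\lambda d})$, and each has $\min(R_i,I_i)\sim\mathrm{Exp}(1+\lambda d)$.
\end{itemize}
The race of exponentials then gives $\p(E_k\mid E_{k,m})\le\e\bigl[\frac{2}{N_k+2}\cdot\frac{3}{N_k+3}\bigr]=O(k^{-2})$. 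Hence $\p(E_k)=O(k^{-1})(d-1)^{-k}$, which is below $(d-1)^{-k}$ for large $k$.

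Two exploration steps are genuinely needed. With only the first one the gain is $\Theta(1/k)$, which merely compensates the $k+1$ terms of the union bound.
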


\begin{remark}
	It would be possible to adapt the proof to show that $\p(E_k)$ is even asymptotically smaller than $1/(d-1)^k P(k)$ for any polynomial $P$, but we will not need such a strong result. Alternatively, we could fix $k$ and look at the asymptotic value of $\p(E_k)$ when $d$ is large, in which case we would obtain
	$c_k (d-1)^{-k}$ with
	\[
	c_k=\sum_{j=0}^k \frac 1 {j!} \frac 1 {(k-j)!}.
	\]
	We have $c_4=2/3<1$ but $c_3>1$, so for large $d$ the inequality of Proposition~\ref{prop_path_escape} holds for $k=4$ but not for $k=1, 2, 3$.
\end{remark}

\begin{proof}
	Note that by~\eqref{ineq:hitting_beta_bound} we have $\p(E_1)\le \p(e_1\rightsquigarrow e_0)+\p(e_1\rightsquigarrow e_2)\le \frac 2{d-1}$. This bound is too large only by a two multiplicative factor. For larger $k$, we first provide an easy similar bound. We claim that we have
	\[
	E_k\subset \bigcup_{j=0}^k\ \{e_j \rightsquigarrow e_0\} \circ \{e_{j+1}\rightsquigarrow e_{k+1}\},
	\]
	with the convention that for $j=0$ (resp $j=k$) the event $\{e_0\rightsquigarrow e_0\}$ (resp. $\{e_{k+1}\rightsquigarrow e_{k+1}\}$) is the full event and the event in the union is thus simply $\{e_{1}\rightsquigarrow e_{k+1}\}$ (resp. $\{e_k\rightsquigarrow e_0\}$). Here $\circ$ means disjoint occurence of the two events, which in this context means that the infection paths realizing the two events can be taken to be disjoint. The union bound and BK-inequality (see~\cite{BK1985} for the historical reference or~\cite[Theorem B.21]{MR1717346} for a discussion on how to apply it in the context of the contact process), together with~\eqref{ineq:hitting_beta_bound}, provide the following bound:
	\begin{align*}
	\p(E_k)&\le \sum_{j=0}^k \p(e_j \rightsquigarrow e_0) \p(e_{j+1}\rightsquigarrow e_{k+1})\\
	&\le  \sum_{j=0}^k \left(\frac 1 {d-1}\right)^{j} \left(\frac 1 {d-1}\right)^{k-j}\\
	&\le \frac {k+1}{(d-1)^k}.
	\end{align*}
	This time the upper bound is too large by a factor $k+1$. However, we didn't use at this point that for $E_k$ to be realized, you should also have infection paths from vertices different from $e_j$ and $e_{j+1}$ to $e_0$ or $e_{k+1}$.
	
	In the remainder of this proof, we condition on the event 
	\[ E_{k,j}:=\{e_j \rightsquigarrow e_0\} \circ \{e_{j+1}\rightsquigarrow e_{k+1}\}\] for some $j\in \{0,\ldots,k\}.$ 
	Let $A_j=\{e_j,e_{j+1}\}$ for $1\le j\le k-1$, $A_0=\{e_1\}$ and $A_k=\{e_k\}$. We write $T_j<T_{j,2}$ for the first two recovery marks or outgoing infection arrows encountered in the graphical exploration from $A_j$. $E_{k,j}$ is independent from this holding time and so, conditionally on $E_{k,j}$, the two successive holding rates for $T_j$ and $T_{j,2}-T_j$ are at most $2+2\lambda d$ and $3+3\lambda d$, respectively.
	
	For $k\ge4$, choose $k-4$ vertices $e_i$ with $i\in\{1,\ldots,k\}\setminus\{j-1,j,j+1,j+2\}$, and look at the probability that one of them recovers before infecting any neighbour and before time $T_{j,2}$. Writing $R_i$ for the first recovery time of vertex $e_i$ and $I_i$ for the first infection time of a neighbour (in the graphical construction), these are independent random variables,
	with $R_i\sim \textrm{Exp}(1)$ and 
	$I_i\sim \textrm{Exp}(\lambda d)$. 
	Vertex $e_i$ recovers before time $T_{j,2}$ if
	\[
	R_i<\min(I_i, T_{j,2}).
	\]
	If we condition on $E_{k,j}$ and on $T_{j,2}$, this has explicit conditional probability
	\[
	\frac 1 {1+\lambda d} \left(1- e^{-(1+\lambda d) T_{j,2}}\right),
	\]
	from which we could compute the probability that this doesn't happen for any vertex $e_i$, which is a necessary condition for the event $E_k$ to be realized. However, we can ease this computation by considering, for each vertex $e_i$, the standard Poisson point processes (from the graphical representation) of recoveries and infections up to time $T_{j,2}$, and then independent ones after this time, so as to have independence with the event $E_{k,j}$. We still write $R_i$ and $I_i$ for the modified ``first infection and recovery'', which are now independent from $E_{k,j}$ but for which we still have recovery of the vertex under the same condition
	\[
	R_i<\min(I_i, T_{j,2}).
	\]
	The advantage of doing this is that we can now first reveal for which of these $k-4$ vertices we have $R_i<I_i$, and write $N_k$ for the random number of vertices for which this is the case.
	For each such vertex, we still have that $\min(R_i,I_i)\sim \textrm{Exp}(1+\lambda d)$. Thus, the probability that $T_j$ and $T_{j,2}$ are smaller than all these times, conditionally on $N_k$, is at most
	\[
	\frac 2 {N_k+2} \cdot\frac 3 {N_k+3}\le \frac {150}{k^2}+\mathbbm 1_{N_k\le k/5}.
	\]
	Finally, observe that  $N_k$ follows a binomial distribution with parameters $k-4$ and $1/(1+\lambda d)$. Using now $\lambda=\lambda_1$ and $\lambda_1 d \le d/(d-2)\le 3$, we can ensure that $N_k$ is larger than $k/5$ with failure probability that is exponentially small in $k$. We thus have
	\[\p(E_k | E_{k,j})\le \e\left[\frac 2 {N_k+2} \cdot\frac 3 {N_k+3}\right]\le O(k^{-2}),
	\]
	and thus
	\[
	\p(E_k)\le O\left(\frac{k^{-1}}{(d-1)^k}\right),
	\]
	proving the result by taking $k$ large enough.
	\end{proof}

\begin{corollary}
	For any $d\ge 3$, we have $\lambda_p>\lambda_1$.
\end{corollary}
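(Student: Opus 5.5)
We combine Proposition~\ref{prop_path_escape} with a continuity argument in $\lambda$ and a first moment bound over self-avoiding paths in $\T$.

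\textbf{Step 1: uniform bound along segments.} Fix $k$ as given by Proposition~\ref{prop_path_escape}, so that at $\lambda=\lambda_1$ we have $\p(E_k)<(d-1)^{-k}$. The event $E_k$ is an increasing event depending on infection paths that may go arbitrarily far. We approximate it by $E_k^{(T,R)}$, the event that the paths exist using only graphical marks up to time $T$ within distance $R$ of the segment. This event depends on finitely many Poisson processes, so $\p(E_k^{(T,R)})$ is continuous in $\lambda$. However, $E_k^{(T,R)}\subset E_k$, which is the wrong direction for an upper bound.

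To obtain an upper bound, we use instead that $\lambda\mapsto\p(E_k)$ is right-continuous at $\lambda_1$, since $E_k$ is a decreasing limit of the local events "the paths exist or the infection escapes beyond time $T$ or radius $R$". Concretely, $\p(E_k)$ is the infimum over $(T,R)$ of probabilities of finite-support events $F_{T,R}\supset E_k$. Each such probability is continuous in $\lambda$, and an infimum of continuous increasing functions is upper semicontinuous. Hence there exist $\lambda>\lambda_1$ and $p<1/(d-1)$ with $\p(E_k)\le p^k$ at that $\lambda$.

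\textbf{Step 2: chaining blocks along a path.} Take any self-avoiding path $x_0,\dots,x_n$ with $n=m(k+1)$. If all $x_i$ lie in $X_\lambda$, then each vertex has an infection path to infinity, and in particular, within each block $x_{i(k+1)+1},\dots,x_{i(k+1)+k}$, every vertex must reach either an endpoint of its block or escape to infinity. Here we need a modified event $E_k'$ in which "$e_j$ reaches $\{e_0,e_{k+1}\}$ or survives within the branches hanging off the segment". I would redo the argument of Proposition~\ref{prop_path_escape} with $\{e_0,e_{k+1}\}$ replaced by $\{e_0,e_{k+1}\}\cup\{\infty \text{ off the segment}\}$. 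At $\lambda_1$, survival has probability $\rho(\lambda_1)=0$ by continuity of the phase transition, so the extra terms vanish at $\lambda_1$ and the strict inequality persists. We then transfer it to $\lambda$ slightly above $\lambda_1$ by the same semicontinuity argument.

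The block events use graphical marks in possibly overlapping regions. To make them independent, I would restrict each block's event to marks inside the subtree spanned by the block's vertices minus the endpoint edges, plus the two endpoint arrows. Since any relevant path stops upon first hitting $e_0$ or $e_{k+1}$, these regions are disjoint across blocks. This gives $\p(x_0,\dots,x_n\in X_\lambda)\le p^{km}$.

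\textbf{Step 3: first moment.} The number of self-avoiding paths of length $n$ from $\circ$ is $d(d-1)^{n-1}$, so the expected number of fully occupied paths is at most $d(d-1)^{n-1}p^{kn/(k+1)}$. Choosing $p$ so that $p^{k/(k+1)}<1/(d-1)$ makes this tend to $0$. This is achievable by applying Proposition~\ref{prop_path_escape} with $k$ large enough, using the $O(k^{-1})$ room in its proof. It follows that $\circ$ lies in no infinite component, so $\lambda\le\lambda_p$, and hence $\lambda_1<\lambda_p$.

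The \textbf{main obstacle} is Step 2: justifying independence across blocks, and handling escapes to infinity through side branches, at a $\lambda$ strictly above $\lambda_1$. The semicontinuity transfer from Step 1 is the tool I would try first.
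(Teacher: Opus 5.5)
Your argument is correct and is essentially the paper's. It uses the same ingredients: Proposition~\ref{prop_path_escape} at $\lambda_1$; $\rho(\lambda_1)=0$, so escapes to infinity through side branches are null there ($E_k'=E_k$ a.s. at $\lambda_1$, and nothing in that proof needs redoing); right-continuity of $\lambda\mapsto\p_\lambda(E_k')$ at $\lambda_1$, via your infimum of finite-box events, which for $\lambda>\lambda_1$ bounds $\p_\lambda(E_k')$ rather than $\p_\lambda(E_k)$; independence of block events from disjoint graphical regions; and a first moment over the $d(d-1)^{n-1}$ paths.

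The one difference is your separator vertices between blocks. They cost a factor $d-1$ per block, which is why you need $\p(E_k)<(d-1)^{-(k+1)}$ and hence the $O(k^{-1})(d-1)^{-k}$ bound from inside the proof. The paper takes consecutive blocks with no gaps, so $e_0$ and $e_{k+1}$ of one block are vertices of the neighbouring blocks. The regions are still disjoint, because the two arrows across a shared edge are independent Poisson processes. The statement $\p(E_k)<(d-1)^{-k}$ then suffices as it stands.
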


\begin{proof}
	We fix some $d\ge 3$, and use Proposition~\ref{prop_path_escape} to fix some $k$ with $\p(E_k)<(d-1)^{-k}$ when $\lambda=\lambda_1$.
	
	Observe that for any $\lambda$, we can conclude that the upper invariant measure contains only finite components if the probability that it contains a path of length $kn$ decays faster than $\frac{1}{d(d-1)^{k n-1}}$.
	
	
	For this, divide the path into $n$ consecutive $k$-uplets. Call one of these sets $\{e_1,\ldots,e_k\}$ and note that
	\[
	\{e_1,\ldots,e_k\in \nu_\lambda\}\subset
	\{e_1,\ldots,e_k \text{ have a path to }\{e_0,e_{k+1}\}\text{ or to $\infty$ avoiding }\{e_0,e_{k+1}\}\}.
	\]
	
	The events on the right-hand side are independent. So we can calculate the probability of one such event, and raise it to the power $n$ to bound the path probability.
	
	When $\lambda=\lambda_1$, the infection doesn't survive, so the probability of the event on the right-hand side reduces to the probability that $e_1,\ldots,e_k \text{ have a path to }\{e_0,e_{k+1}\}$, which by Proposition~\ref{prop_path_escape} is smaller than $(d-1)^{-k}$. However, the probability in the right-hand side is continuous in $\lambda$, and so we can choose some $\lambda>\lambda_1$ and still have
	\[
	p_\lambda(k):=\p(e_1,\ldots,e_k \text{ have a path to }\{e_0,e_{k+1}\}\text{ or to $\infty$ avoiding }\{e_0,e_{k+1}\})<(d-1)^{-k}.
	\]
	We deduce
	\[
		\e\left(
		\#\{\text{vertices connected through $\nu_\lambda$ at distance }kn\}
		\right)
		\leq
		d(d-1)^{kn-1}
		p_\lambda(k)^n
		\rightarrow 0
	\]
	and by taking $n$ large we see that the probability to be in an infinite component must be $0$.
\end{proof}

\begin{figure}[H]\centering
\includegraphics[width=0.6\textwidth]{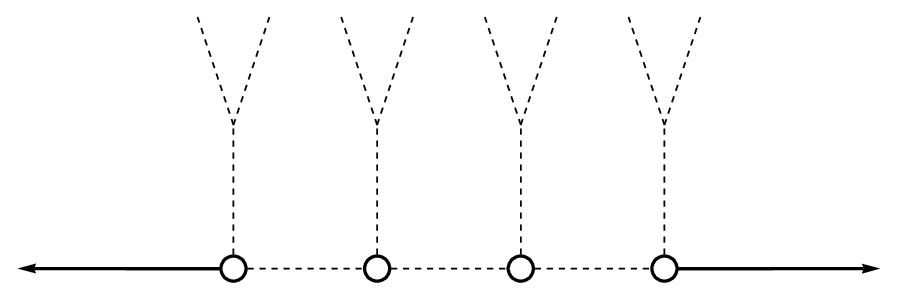}
\caption{The event $E_k$ of interest is that none of the infections from the $k$ circled vertices infect across the bold edges, sketched above for the $3$-regular tree with $k=4$.}
\end{figure}

\section*{Acknowledgements}

JF is grateful for the hospitality of NYU Shanghai where the majority of this work was done.

\printbibliography

\appendix

\section{Calculations for Proposition~\ref{prop_isolated}}
\label{app:isolation_product}

The given constant $1/181$ depends on some elementary numerical work. Put
\[
A_d:=1-\frac{1}{d-2}\left(
1+\frac{2d}{(d-1)(d+2-2\sqrt{d-1})}
\right),
\]
and let
\[
G_d(\lambda):=
\frac{d\lambda}{2(1+\lambda d)^2},
\qquad
H_d(\lambda):=
\left(
\frac{1-\tilde{\beta}(1/(\lambda d^2))}{\lambda d}
\right)^{d-1}.
\]

\begin{lemma}\label{lem:isolation_product_bound}
For every integer $d\geq7$ and $\lambda\in[1/(d-1),1/(d-2)]$,
\[
A_dG_d(\lambda)H_d(\lambda)>\frac1{181}.
\]
\end{lemma}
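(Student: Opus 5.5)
The plan is to reduce the two-parameter inequality to a one-variable statement in $d$, by showing that the product is minimised at the right endpoint $\lambda=1/(d-2)$. After that, a finite numerical check for small $d$ and an explicit asymptotic estimate for large $d$ should finish it.

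\textbf{Step 1: monotonicity in $\lambda$.} Write $x=\lambda d\in[\tfrac{d}{d-1},\tfrac{d}{d-2}]$, so $x\geq 1$.
\begin{itemize}
\item The factor $A_d$ does not depend on $\lambda$.
\item The factor $G_d=\tfrac{x}{2(1+x)^2}$ is decreasing for $x>1$.
\item For $H_d$, set $\mu=1/(\lambda d^2)=1/(xd)$. Then
\[
\log H_d=(d-1)\bigl(-\log x+\log(1-\tilde\beta(\mu))\bigr).
\]
\end{itemize}
As $x$ increases, $\mu$ decreases, so $-\log x$ pulls $\log H_d$ down while $1-\tilde\beta(\mu)$ increases. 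I will bound the derivative of $\log(1-\tilde\beta(\mu))$ in $x$ using $\tilde\beta(\mu)=\mu\,c(\mu)$ with $c$ increasing and $c\le 2$ on the relevant range, since $4(d-1)\mu^2\le 4(d-1)/d^2<1$. This should show that this derivative is $O(1/(x^2d))$, which is dominated by the $-1/x$ term. Hence $H_d$ is decreasing in $\lambda$ on the interval, and it suffices to treat $\lambda=1/(d-2)$, where $x=d/(d-2)$ and $\mu=(d-2)/d^2$.

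\textbf{Step 2: small $d$.} Evaluate $F(d):=A_dG_d(\tfrac1{d-2})H_d(\tfrac1{d-2})$ directly for $d=7,\dots,D$, with a moderate cutoff $D$ (something like $30$), using rigorous rounding. A rough check at $d=7$ gives:
\begin{itemize}
\item $A_7\approx0.686$;
\item $G_7\approx0.1215$;
\item $\tilde\beta(5/49)\approx0.109$, so $H_7\approx(0.714)^6(0.891)^6\approx0.0663$.
\end{itemize}
The product is about $0.00553$, just above $1/181\approx0.005525$. So $d=7$ is essentially the extremal case, and the numerics there must be done carefully, with enough digits or interval arithmetic.

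\textbf{Step 3: large $d$.} For $d>D$, I will derive explicit inequalities:
\begin{itemize}
\item $A_d\ge 1-\tfrac{1}{d-2}-O(d^{-2})$, with an explicit constant, using $d+2-2\sqrt{d-1}\ge d/2$.
\item $G_d\ge \tfrac18-O(1/d)$.
\item For $H_d$, use $(1-2/d)^{d-1}\ge e^{-2}(1-O(1/d))$ and $\tilde\beta(\mu)\le \mu(1+4(d-1)\mu^2)\le \tfrac1d(1+4/d)$, so that $(1-\tilde\beta)^{d-1}\ge e^{-1}(1-O(1/d))$.
\end{itemize}
Together these give $F(d)\ge \tfrac{e^{-3}}{8}(1-C/d)$ with an explicit $C$. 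The limit $e^{-3}/8\approx 1/160.7$ leaves a margin of roughly $12\%$ over $1/181$, so the bound holds once $C/d<0.11$. This fixes the cutoff $D$ needed in Step 2.

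\textbf{Main obstacle.} The hard part is the tightness at $d=7$, and possibly $d=8$, where the margin is about $0.1\%$. Crude bounds will fail there, so I will use exact evaluation at those values. I also need to verify that the monotonicity in Step 1 genuinely holds at $d=7$, where the correction term from $\tilde\beta$ is largest relative to $1/x$. If that comparison is too delicate, the fallback is to evaluate $A_dG_dH_d$ on a fine $\lambda$-grid for $d=7,8$ and control the variation between grid points by a Lipschitz bound.
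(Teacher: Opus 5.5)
Your plan is correct and shares the paper's skeleton. Both reduce to $\lambda=1/(d-2)$ by monotonicity in $x=\lambda d$. Both treat $d=7$ by exact arithmetic: the paper writes $R_7=(11+\sqrt{1801})/84$ and encloses $\sqrt6$ and $\sqrt{1801}$ between rationals, and the relative margin over $1/181$ is only about $0.05\%$. Both use the limit $e^{-3}/8$ for large $d$. The differences are in how each step is closed.

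For the $\lambda$-monotonicity, knowing that $c$ is increasing with $c\le 2$ does not by itself bound $\tilde\beta'(\mu)=c(\mu)+\mu c'(\mu)$ from above, so you would also need control of $c'$. The paper simply differentiates exactly,
\[
\frac{\mathrm d}{\mathrm dx}\,\frac{1-\tilde\beta(1/(dx))}{x}=\frac{1}{x^2}\Bigl(\frac{2}{\sqrt{d^2x^2-4(d-1)}}-1\Bigr).
\]
This is negative for all $x\ge 1$ and $d\ge 5$ because the square root is at least $d-2$, so your grid fallback is never needed.

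For uniformity in $d$, the paper avoids any $(1-C/d)$ remainder. It replaces $A_d$ and $R_d=H_d(\lambda_*)^{1/(d-1)}$ by the rational lower bounds $\frac{d^2-5d+3}{(d-2)^2}$ and $\frac{d^2-3d+3}{d^2}$. The second comes from checking the sign, at $z=\frac{d-3}{d(d-2)}$, of the quadratic $(d-1)qz^2-z+q$ with $q=(d-2)/d^2$, whose smaller root is $\tilde\beta(q)$. This gives $F_d\ge J_d a_d$ with $J_d$ increasing and $a_d$ decreasing to $e^{-3}$. Then $a_d\ge e^{-3}$ comes for free, and all $d\ge 8$ reduce to four hand-checkable numbers.

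Your cutoff-plus-enumeration route also works, but the cutoff will be higher than you expect. With the bounds you list, you lose about $1/d$ in $A_d$ and about $3.5/d$ in $(1-\tilde\beta)^{d-1}$ when using $\tilde\beta\le \frac1d+\frac4{d^2}$. So the constant is at least about $4.5$, and the cutoff is around $d\approx 40$ or beyond rather than $30$. Step 2 then becomes several dozen interval-arithmetic evaluations, in effect a computer-assisted check. Note also that $d=8$ is not delicate: $F_8\approx 0.00587$, roughly $6\%$ above $1/181$.
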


\begin{proof}
First we justify taking the endpoint in $\lambda$. Write $x:=\lambda d>1$. Then
\[
G_d(\lambda)=g(x):=\frac{x}{2(1+x)^2},
\qquad
g'(x)=\frac{1-x}{2(1+x)^3}<0.
\]

For the base of $H_d$, put
\[
s(x):=\sqrt{1-\frac{4(d-1)}{d^2x^2}}.
\]

Using the definition of $\tilde{\beta}$ and differentiating gives
\[
\frac{1-\tilde{\beta}(1/(dx))}{x}
=\frac1x-\frac{d(1-s(x))}{2(d-1)},
\qquad
\frac{\mathrm d}{\mathrm dx}
\frac{1-\tilde{\beta}(1/(dx))}{x}
=\frac1{x^2}\left(\frac{2}{dxs(x)}-1\right)<0,
\]
because $dxs(x)=\sqrt{d^2x^2-4(d-1)}\geq d-2>2$. Hence both $G_d$ and $H_d$ are minimized at $\lambda_*:=1/(d-2)$. At this endpoint write
\[
B_d:=G_d(\lambda_*)=\frac{d(d-2)}{8(d-1)^2}
\]
and
\[
R_d:=H_d(\lambda_*)^{1/(d-1)}
=\frac{d^2+\sqrt{d^4-4(d-1)(d-2)^2}-6d+4}{2d(d-1)}.
\]
It remains to bound $F_d:=A_dB_dR_d^{d-1}$.

We first record two convenient rational estimates. The bound
\[
\frac{2d}{(d-1)(d+2-2\sqrt{d-1})}\leq\frac3{d-2}
\]
follows, on writing $y=\sqrt{d-1}$ and clearing positive denominators, from $y^2(y-3)^2+2\geq0$. Therefore
\[
A_d\geq\frac{d^2-5d+3}{(d-2)^2}.
\]

Next put $q=(d-2)/d^2$. Since $q<1/(2\sqrt{d-1})$, the number $\tilde{\beta}(q)$ is the smaller of the two real zeros of
\[
p_q(z):=(d-1)qz^2-z+q.
\]

Since
\[
p_q\left(\frac{d-3}{d(d-2)}\right)
=-\frac{3(d^2-5d+3)}{d^4(d-2)}<0,
\]
we have $\tilde{\beta}(q)<(d-3)/(d(d-2))$, and consequently
\[
R_d\geq r_d:=\frac{d^2-3d+3}{d^2}.
\]

Thus
\[
F_d\geq K_d:=J_d a_d,
\qquad
J_d:=\frac{d(d^2-5d+3)}{8(d-2)(d-1)^2},
\qquad
a_d:=r_d^{d-1}.
\]

For real $x\geq7$, direct differentiation gives
\[
J'(x)=
\frac{x^4+4x^3-19x^2+20x-6}
{8(x-2)^2(x-1)^4}>0.
\]

On the other hand, $a(x):=(1-3/x+3/x^2)^{x-1}$ is decreasing for $x\geq6$ and tends to $e^{-3}$. Indeed, applying $\log(1-u)\leq-u-u^2/2$ with $u=3(x-1)/x^2$ yields
\[
\frac{\mathrm d}{\mathrm dx}\log a(x)
\leq-\frac{3(x-1)(x^3-6x^2+18x-9)}
{2x^4(x^2-3x+3)}<0.
\]

Direct cross-multiplication gives
\[
K_8=\frac9{98}\left(\frac{43}{64}\right)^7>\frac{1}{180}.
\]
For $9\leq d\leq11$, monotonicity gives
\[
K_d\geq J_9a_{11}
=\frac{351}{3584}\left(\frac{91}{121}\right)^{10}>\frac{1}{180},
\]
whereas for $12\leq d\leq14$ it gives
\[
K_d\geq J_{12}a_{14}
=\frac{261}{2420}\left(\frac{157}{196}\right)^{13}>\frac{1}{180}.
\]

For the remaining values $d\geq15$, we use
\[
e<\sum_{k=0}^{5}\frac1{k!}
+\frac1{6!}\sum_{j=0}^{\infty}\frac1{7^j}
=\frac{11743}{4320}<\frac{68}{25}.
\]
The monotonicity and limit above therefore give
\[
K_d\geq J_{15}e^{-3}
>\frac{2295}{20384}\left(\frac{25}{68}\right)^3>\frac{1}{180}.
\]

We have thus shown $F_d>\nicefrac{1}{180}$ for every $d\geq8$.

For the remaining value $d=7$,
\[
A_7=\frac45-\frac7{15(9-2\sqrt6)},
\qquad B_7=\frac{35}{288},
\qquad R_7=\frac{11+\sqrt{1801}}{84}.
\]

Because
\[
\frac{2449}{1000}<\sqrt6<\frac{49}{20},
\qquad
\frac{10609}{250}<\sqrt{1801}<\frac{42439}{1000},
\]
we have
\[
\frac{422}{615}<A_7<\frac{3016}{4395},
\qquad
\frac{4453}{7000}<R_7<\frac{17813}{28000}.
\]
A final cross-multiplication now gives
\[
\frac1{181}
<\frac{422}{615}\frac{35}{288}\left(\frac{4453}{7000}\right)^6
<F_7
<\frac{3016}{4395}\frac{35}{288}\left(\frac{17813}{28000}\right)^6
<\frac{1}{180}.
\]

Thus $d=7$ is the strict minimiser at $\lambda=\tfrac{1}{d-2}$ and moreover this proves the claim.
\end{proof}

\end{document}